\def\y{{\bf y}}
\def\x{{\bf x}}
\def\x{{\mathbf x}}
\def\x{{\bf x}}
\def\y{{\bf y}}
\def\h{{\bf h}}
\def\cH{{\cal H}}
\def\be{\begin{equation}}
\def\ee{\end{equation}}
\def\ba{\left[\begin{array}}
\def\ea{\end{array}\right]}
\def\x{{\bf x}}
\def\y{{\bf y}}
\def\1{{\bf 1}}
\def\g{{\bf g}}
\def\0{{\bf 0}}
\newtheorem{theorem}{Theorem}
\newtheorem{lemma}{Lemma}
\begin{document}

\begin{singlespace}

\title {Asymmetric Little model and its ground state energies 
}
\author{
\textsc{Mihailo Stojnic}
\\
\\
{School of Industrial Engineering}\\
{Purdue University, West Lafayette, IN 47907} \\
{e-mail: {\tt mstojnic@purdue.edu}} }
\date{}
\maketitle

\centerline{{\bf Abstract}} \vspace*{0.1in}

In this paper we look at a class of random optimization problems that arise in the forms typically known in statistical physics as Little models. In \cite{BruParRit92} the Little models were studied by means of the well known tool from the statistical physics called the replica theory. A careful consideration produced a physically sound conclusion that the behavior of almost all important features of the Little models essentially resembles the behavior of the corresponding ones of appropriately scaled Sherrington-Kirkpatrick (SK) model. In this paper we revisit the Little models and consider their ground state energies as one of their main features. We then rigorously show that they indeed can be lower-bounded by the corresponding ones related to the SK model. We also provide a mathematically rigorous way to show that the replica symmetric estimate of the ground state energy is in fact a rigorous upper-bound of the ground state energy. Moreover, we then recall on a set of powerful mechanisms we recently designed for a study of the Hopfield models in \cite{StojnicHopBnds10,StojnicMoreSophHopBnds10} and show how one can utilize them to substantially lower the upper-bound that the replica symmetry theory provides.

\vspace*{0.25in} \noindent {\bf Index Terms: Little models; ground-state energy}.

\end{singlespace}

\section{Introduction}
\label{sec:back}

We start by looking at what is typically known in mathematical physics as Little model. The model was popularized in \cite{Little74}. It essentially looks at what is called Hamiltonian of the following type
\begin{equation}
\cH(H,\x,\y)=\sum_{i=1}^{n}\sum_{j=1}^{m}  A_{ij}\x_i\y_j,\label{eq:ham}
\end{equation}
where
\begin{equation}
A_{ij}(H)=H_{ij},\label{eq:hamAij}
\end{equation}
are the so-called quenched interactions and $H$ is an $m\times n$ matrix (we will typically consider scenario where $m$ and $n$ are large and $\frac{m}{n}=\alpha$ where $\alpha$ is a constant independent of $n$; however, many of our results will hold even for fixed $m$ and $n$). Typically, one assumes that the elements of matrix $H$ as well as the elements of vector $\x$ are binary. In fact, to be a bit more precise, one assumes that the elements of $H$ are from set $\{-1,1\}$, whereas the elements of $\x$ are from set $\{-\frac{1}{\sqrt{n}},\frac{1}{\sqrt{n}}\}$ and the elements of $\y$ are from set $\{-\frac{1}{\sqrt{m}},\frac{1}{\sqrt{m}}\}$ (if of this form the elements of $\x$ and $\y$ are typically called binary spins). This is fairly often written a bit differently with scaling factors $\frac{1}{\sqrt{m}}$ and $\frac{1}{\sqrt{n}}$ often taken outside the sum and $\x,\y\in\{-1,1\}$. However, we will find our presentation to be substantially less cumbersome if we do keep the scaling factors inside the sum.

Also, while the assumption that elements of $H$ are binary as well is not strange/novel in statistical physics it is in fact even more common in neural networks communities (see, e.g. \cite{Tal98,ShchTir93,BarGenGueTan10,BarGenGueTan12}). Additionally, in the physics literature one usually follows a convention and introduces a minus sign in front of the Hamiltonian given in (\ref{eq:ham}). Since our main concern is not really the physical interpretation of the given Hamiltonian but rather mathematical properties of such forms we will avoid the minus sign and keep the form as in (\ref{eq:ham}). Also, fairly often one considers the scenario where the interactions on the main diagonal are excluded. This is a bit more typical in say Sherrington-Kirkpatrick (SK) or Hopfield models (see, e.g. \cite{Hop82,SheKir72,StojnicHopBnds10,StojnicMoreSophHopBnds10}) where $m=n$ and/or $\x_i=\y_i,1\leq i\leq n$. Since we consider a bit more general (rather different) scenario we will refrain from such an exclusion. If of interest of course it is relatively easy to remove them (however, we find writing substantially more compact if we keep these terms as well). Also, while on the subject that involves $m=n$ we should mention that when $m=n$ one can consider the scenario with the quenched interactions being symmetric, i.e. $A_{ij}=A_{ji},\forall i,j$ (or alternatively $H_{ij}=H_{ji},\forall i,j$). In that case the Little model is typically referred to as the symmetric Little model (along the same lines the model that we will pursue here is often called the asymmetric Little model). While we defer a thorough discussion about the symmetric model to a companion paper, we briefly mention that in \cite{BruParRit92} the authors considered the asymmetric model but did mention that all results they obtained through their considerations should be translatable to the symmetric case as well (as it will turn out many of the results that we will present below for the asymmetric case can indeed be translated to the symmetric case as well).

To characterize the behavior of physical interpretations that can be described through the above Hamiltonian one then looks at the partition function
\begin{equation}
Z(\beta,H)=\sum_{\x\in\{-\frac{1}{\sqrt{n}},\frac{1}{\sqrt{n}}\}^n,\y\in\{-\frac{1}{\sqrt{m}},\frac{1}{\sqrt{m}}\}^m}e^{\beta\cH(H,\x,\y)},\label{eq:partfun}
\end{equation}
where $\beta>0$ is what is typically called the inverse temperature. Depending on what is the interest of studying one can then also look at a more appropriate scaled $\log$ version of $Z(\beta,H)$ (typically called the free energy)
\begin{equation}
f_p(n,\beta,H)=\frac{\log{(Z(\beta,H)})}{\beta \sqrt{n}}=\frac{\log{(\sum_{\x\in\{-\frac{1}{\sqrt{n}},\frac{1}{\sqrt{n}}\}^n,\y\in\{-\frac{1}{\sqrt{m}},\frac{1}{\sqrt{m}}\}^m}e^{\beta\cH(H,\x,\y)})}}{\beta \sqrt{n}}.\label{eq:logpartfun}
\end{equation}
Studying behavior of the partition function or the free energy of the Little model of course has a long history. Since we will not focus on the entire function in this paper we just briefly mention that a long line of results can be found in e.g. excellent references \cite{BruParRit92,BarGenGue11bip,AmiGutSom85}. In this paper we will focus on studying optimization/algorithmic aspects of $\frac{\log{(Z(\beta,H)})}{\beta n}$. More specifically, we will look at a particular regime $\beta,n\rightarrow\infty$ (which is typically called a zero-temperature thermodynamic limit regime or as we will occasionally call it the ground state regime). In such a regime one has
\begin{eqnarray}
\hspace{-.3in}\lim_{\beta,n\rightarrow\infty}f_p(n,\beta,H)=
\lim_{\beta,n\rightarrow\infty}\frac{\log{(Z(\beta,H)})}{\beta \sqrt{n}}& = &\lim_{n\rightarrow\infty}\frac{\max_{\x\in\{-\frac{1}{\sqrt{n}},\frac{1}{\sqrt{n}}\}^n,\y\in\{-\frac{1}{\sqrt{m}},\frac{1}{\sqrt{m}}\}^m}\cH(H,\x,\y)}{\sqrt{n}}\nonumber \\
& = & \lim_{n\rightarrow\infty}\frac{\max_{\x\in\{-\frac{1}{\sqrt{n}},\frac{1}{\sqrt{n}}\}^n,\y\in\{-\frac{1}{\sqrt{m}},\frac{1}{\sqrt{m}}\}^m}\y^T H\x}{\sqrt{n}},\label{eq:limlogpartfun}
\end{eqnarray}
which essentially renders the following form (often called the ground state energy)
\begin{equation}
\lim_{\beta,n\rightarrow\infty}f_p(n,\beta,H)=\lim_{n\rightarrow\infty}\frac{\max_{\x,\y\in\{-\frac{1}{\sqrt{n}},\frac{1}{\sqrt{n}}\}^n}\y^T H\x}{\sqrt{n}}.\label{eq:posham}
\end{equation}
The above form will be one of the main subjects that we study in this paper. We will refer to the optimization part of (\ref{eq:posham}) as the asymmetric Little form.

As mentioned earlier one can study its a symmetric counterpart as well. Also, as mentioned earlier we will present results in that direction in a forthcoming paper. Additionally, looking at the asymmetric Little form from a purely mathematical point of view, one can also consider the minmax version of the problem. While this problem may not typically be of so much interest from the theoretical physics' point of view it is mathematically a very interesting problem and in fact a generalized version of several hard combinatorial problems. Since, the concepts that we will develop in this paper will be powerful enough to handle that case as well, we will in the later parts of the paper provide a few pointers as to how it can be handled.

For mathematical completeness we will formally define a minmax counterpart of (\ref{eq:posham})
\begin{equation}
\lim_{\beta,n\rightarrow\infty}f_n(n,\beta,H)=\lim_{n\rightarrow\infty}\frac{\min_{\x\in\{-\frac{1}{\sqrt{n}},\frac{1}{\sqrt{n}}\}^n}
\max_{\y\in\{-\frac{1}{\sqrt{m}},\frac{1}{\sqrt{m}}\}^m}\y^T H\x}{\sqrt{n}}.\label{eq:negham}
\end{equation}
We will then correspondingly refer to the optimization part of (\ref{eq:negham}) as the minmax asymmetric Little form (along the same lines, we will sometimes refer to the optimization part of (\ref{eq:posham}) as the max asymmetric Little form).

In the following sections we will present a collection of results that relate to behavior of the forms given in (\ref{eq:posham}) and (\ref{eq:negham}) when they are viewed in a statistical scenario. The results related to (\ref{eq:posham}) that we will present below will essentially correspond to what is called the ground state energy of the asymmetric Little model. As it will turn out, in the statistical scenario that we will consider, (\ref{eq:posham}) and (\ref{eq:negham}) will be almost completely characterized by their corresponding average values
\begin{equation}
\lim_{\beta,n\rightarrow\infty}Ef_p(n,\beta,H)=\lim_{n\rightarrow\infty}\frac{E\max_{\x\in\{-\frac{1}{\sqrt{n}},\frac{1}{\sqrt{n}}\}^n,
\y\in\{-\frac{1}{\sqrt{m}},\frac{1}{\sqrt{m}}\}^m}\y^T H\x}{\sqrt{n}}\label{eq:poshamavg}
\end{equation}
and
\begin{equation}
\lim_{\beta,n\rightarrow\infty}Ef_n(n,\beta,H)=\lim_{n\rightarrow\infty}\frac{E\min_{\x\in\{-\frac{1}{\sqrt{n}},\frac{1}{\sqrt{n}}\}^n}
\max_{\y\in\{-\frac{1}{\sqrt{m}},\frac{1}{\sqrt{m}}\}^m}\y^T H\x}{\sqrt{n}}.\label{eq:neghamavg}
\end{equation}

Also, before proceeding further we should mention a few known/conjectured facts about the asymmetric/symmetric Little models as well as where among many of them our work falls in. As we mentioned earlier, these models have been studied for a long time and instead of reviewing the entire chronology of these studies we will focus on the results that are probably the most relevant when it comes to what we will present below. Also, we preface this short discussion by saying that almost all results mentioned below treat a general Little form, i.e. they treat the free energy and all other parameters of interest at any temperature (i.e. for any value of $\beta$). Of course, this is substantially wider than what we are concerned here with. Our concern is essentially a special case, called the zero-temperature regime where $\beta\rightarrow\infty$. However, all our results can be extended to general partition function and to the free energy at any temperature. Since we are mostly interested in an optimization/algorithmic point of view (rather than a pure physicist point of view) we found it to be substantially easier to present the results only for ``the ground-state" regime (we will present the full blown partition function/free energy analysis elsewhere).

Now, going back to a more concrete discussion about known related results about the Little model. We start by mentioning that an early approach to connect the Little models and the SK model followed what was presented in \cite{CabMarPaoPar88,AmiGutSom85}. In \cite{AmiGutSom85} considered an application of the Little model in a neural network context. They employed the replica theory approach and assuming a replica symmetry they obtained a same set of results one would get for an appropriately scaled SK model. In \cite{CabMarPaoPar88} the authors conducted a various numerical experiments that eventually suggested that the symmetric Little model may be a bit more different from the SK model than initially thought. Namely, they obtained values for the free energy at certain temperatures that seemed to suggest that if one is to extrapolate them to a thermodynamic limit regime that the energies would be a bit higher than the corresponding ones of the SK model (we should emphasize that since we keep avoiding the negative sign of some of the quantities this phenomenon is termed slightly differently in \cite{CabMarPaoPar88,BruParRit92}; namely, according to the definitions of the models overthere the energies were considered as potentially a bit lower than the corresponding ones of the SK model). However, \cite{BruParRit92} then went a bit further and attempted to provide a sound settling of this phenomenon. Namely, in \cite{BruParRit92} the authors recalled that all numerical experiments are typically done on finite size examples. Then they further pointed out that deducing the thermodynamic behavior based on that may not always work correctly (moreover, they also pointed out that if one works with a finite $n$, occasionally there could even be a substantial difference between the results one would get for say Gaussian or Bernoulli Little models; we will say more on the definitions of these below).

Continuing further along these lines the authors in \cite{BruParRit92} then proceeded with the analysis of the asymmetric Little model through the replica framework. Eventually their analysis produced the same answer one initially could have predicted, i.e. the thermodynamic behavior of the asymmetric Little model should indeed match the thermodynamic behavior of the appropriately scaled SK model. Moreover, while the authors in \cite{BruParRit92} worked with the asymmetric Little model (in the framework that they used it offers a nice set of numerical simplifications) they also pointed out that it is expected that all the results that they presented should be applicable to the symmetric Little model.

In a piece of a bit more recent work the authors in \cite{BarGenGue11bip} considered a set of models that also includes the asymmetric Little model. They presented a collection of rigorous as well as replica type results. In fact, they were able to show that in certain low $\beta$ regime (or in a physicist terminology, in a certain high temperature regime) the two actually match. Moreover, they also showed several replica symmetric predictions as to how the free energy should behave in the entire range of $\beta$'s. Also, as they correctly pointed out, it turns out that the replica symmetry predictions are not sustainable as $\beta$ grows and consequently will not be correct for $\beta$'s larger than a critical value (and certainly not for $\beta\rightarrow\infty$). In the regime where the replica symmetry predictions are correct \cite{BarGenGue11bip} does show that the behavior of the asymmetric Little model indeed matches the corresponding one of the SK model.

As mentioned earlier, in this paper we attack probably the hardest regime, i.e. the zero-temperature -- $\beta\rightarrow\infty$ regime. (Somewhat contradictory, while this regime is long believed to be the hardest for achieving a mathematically sound resolution, it is the easiest to present which, as we mentioned earlier, is the reason why we chose it!) The results that we will present hint that even in such a regime it is reasonable to believe that the replica theory predictions could be correct. While the symmetry may need to be broken, the original wisdom that the Little models should resemble the SK model may still be in place (of course, as is now well known in the zero-temperature regime the symmetry has to be broken in the SK model as well).

Before proceeding further with our presentation we will briefly discuss the organization of the paper. In Section \ref{sec:poshop} we will consider the asymmetric Little form. More specifically, in Subsection \ref{sec:poshoplb} we will present a way to create a lower bound on the the ground state energy of the asymmetric Little model. In Subsection \ref{sec:poshopub} we will present a way to create an upper bound on the very same ground state energy. Moreover, the upper bound that we will present in Subsection \ref{sec:poshopub} will match the one that can be obtained through the replica approach (assuming the replica symmetry). In Subsection \ref{sec:poshopublow} we will then present a mechanism that can ``substantially" lower the upper bounds from Subsection \ref{sec:poshopub}. In Section \ref{sec:neghop} we will consider the minmax counterpart of the asymmetric Little form. Along the same lines, we will then in Section \ref{sec:neghop} present a way that can create the minmax counterparts for many of the results from Section \ref{sec:poshop}. Finally in Section \ref{sec:conc} we will present a few concluding remarks.

\section{Asymmetric Little form}
\label{sec:poshop}

In this section we will look at the following optimization problem (which clearly is the key component in estimating the ground state energy of the asymmetric Little model in the thermodynamic limit)
\begin{equation}
\max_{\x\in\{-\frac{1}{\sqrt{n}},\frac{1}{\sqrt{n}}\}^n,\y\in\{-\frac{1}{\sqrt{m}},\frac{1}{\sqrt{m}}\}^m}\y^T H\x.\label{eq:posham1}
\end{equation}
For a deterministic (given fixed) $H$ this problem is of course known to be NP-hard (it essentially falls into the class of binary bilinear optimization problems). Instead of looking at the problem in (\ref{eq:posham1}) in a deterministic way i.e. in a way that assumes that matrix $H$ is deterministic, we will look at it in a statistical scenario (this is of course a typical scenario in statistical physics). Within a framework of statistical physics (and even more so within the framework of neural networks) the problem in (\ref{eq:posham1}) (or say similar ones like the Hopfield form) is studied assuming that elements of matrix $H$ are comprised of Bernoulli $\{-1,1\}$ i.i.d. random variables see, e.g. \cite{Tal98,PasShchTir94,BruParRit92,AmiGutSom85}. While our results will turn out to hold in such a scenario as well we will present them in a different scenario: namely, we will assume that the elements of matrix $H$ are i.i.d. standard normals. We will then call the form (\ref{eq:posham1}) with Gaussian $H$, the Gaussian asymmetric Little form. On the other hand, we will call the form (\ref{eq:posham1}) with Bernoulli $H$, the Bernoulli asymmetric Little form. In the remainder of this section we will look at possible ways to estimate the optimal value of the optimization problem in (\ref{eq:posham1}). In the first part below we will introduce a strategy that can be used to obtain a lower bound on the optimal value. In the second part we will then create a corresponding upper-bounding strategy. As it will turn out the lower bound will match the expected prediction that relates the asymmetric Little model to the SK-model. However, the upper bound will be a bit above that. In the third part of this section we will then present a mechanism that can be used to lower the upper-bound. Finally the upper-bound will come fairly close to matching the lower bound (which, as mentioned above, essentially matches the result obtained in \cite{BruParRit92} and concurs with the corresponding (appropriately scaled) one for the SK model). This hints that the prediction made in \cite{BruParRit92} on the grounds of the replica theory may in fact be mathematically rigorously correct.

\subsection{Lower-bounding ground state energy of the asymmetric Little form}
\label{sec:poshoplb}

In this section we will look at the optimization problem from (\ref{eq:posham1}). In fact, we will look at its objective value. To that end let us introduce a quantity $\xi_p$ as its objective value
\begin{equation}
\xi_p=\max_{\x\in\{-\frac{1}{\sqrt{n}},\frac{1}{\sqrt{n}}\}^n,\y\in\{-\frac{1}{\sqrt{m}},\frac{1}{\sqrt{m}}\}^m}\y^T H\x.\label{eq:sqrtposham1}
\end{equation}
As mentioned above, we will assume that the elements of $H$ are i.i.d. standard normal random variables.

In what follows we will to a degree follow the strategies presented in \cite{StojnicHopBnds10,StojnicMoreSophHopBnds10} when the Hopfield models were discussed. We will try to provide as thorough explanation as possible but will still on occasion assume a substantial level of familiarity with many of the results we presented in \cite{StojnicHopBnds10,StojnicMoreSophHopBnds10}.

As we did in \cite{StojnicHopBnds10,StojnicMoreSophHopBnds10}, we before proceeding further with the analysis of (\ref{eq:sqrtposham1}) recall on several well known results that relate to Gaussian random variables and the processes they create. First we recall on the following results from \cite{Slep62,Gordon88} that relate to statistical properties of certain Gaussian processes.
\begin{theorem}(\cite{Slep62,Gordon88})
\label{thm:Slepian1} Let $X_{i}$ and $Y_{i}$, $1\leq i\leq n$, be two centered Gaussian processes which satisfy the following inequalities for all choices of indices
\begin{enumerate}
\item $E(X_{i}^2)=E(Y_{i}^2)$
\item $E(X_{i}X_{l})\leq E(Y_{i}Y_{l}), i\neq l$.
\end{enumerate}
Then
\begin{equation*}
P(\bigcap_{i}(X_{i}\geq \lambda_{i}))\leq P(\bigcap_{i}(Y_{i}\geq \lambda_{i}))
\Leftrightarrow P(\bigcup_{i}(X_{i}\geq \lambda_{i}))\leq P(\bigcup_{i}(Y_{i}\geq \lambda_{i})).
\end{equation*}
\end{theorem}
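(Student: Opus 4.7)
My plan is to prove the theorem via the classical Gaussian interpolation argument underlying Slepian's lemma. First I would take independent copies so that $(X_i)$ and $(Y_i)$ may be assumed independent without loss of generality, and then introduce the centered Gaussian interpolating process
$$Z_i(t)=\sqrt{1-t}\,X_i+\sqrt{t}\,Y_i,\qquad t\in[0,1],$$
whose covariance matrix is the linear interpolant of those of $X$ and $Y$; in particular $Z(0)=X$ and $Z(1)=Y$. The overall strategy is to show that for a suitable smooth proxy $f$ of the indicator of the event of interest, the map $t\mapsto Ef(Z(t))$ is monotone in $t$; the endpoint values are exactly the two probabilities in the statement.

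For the intersection version I would take the product proxy
$$f_\varepsilon(z)=\prod_{i=1}^{n}g_\varepsilon(z_i-\lambda_i),$$
where $g_\varepsilon$ is a bounded, nondecreasing, $C^2$ approximation to the indicator of $[0,\infty)$. Differentiating $Ef_\varepsilon(Z(t))$ in $t$ and appealing to Gaussian integration by parts, i.e.\ the identity $E(W_ih(W))=\sum_j E(W_iW_j)E(\partial_jh(W))$ for centered Gaussian $W$ and smooth $h$ of moderate growth, reduces everything (after the diagonal cancellation forced by hypothesis (1)) to the expression
$$\frac{d}{dt}Ef_\varepsilon(Z(t))=\tfrac12\sum_{i\neq j}\bigl(E(Y_iY_j)-E(X_iX_j)\bigr)\,E\!\left(\frac{\partial^2f_\varepsilon}{\partial z_i\partial z_j}(Z(t))\right).$$
For the product proxy these mixed partials are manifestly nonnegative, because $\partial_i\partial_jf_\varepsilon=g_\varepsilon'(z_i-\lambda_i)g_\varepsilon'(z_j-\lambda_j)\prod_{k\neq i,j}g_\varepsilon(z_k-\lambda_k)\geq 0$, while hypothesis (2) makes each coefficient nonnegative. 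Hence the derivative is nonnegative throughout $[0,1]$; integrating and sending $\varepsilon\downarrow 0$ yields $P(\bigcap_i\{X_i\geq\lambda_i\})\leq P(\bigcap_i\{Y_i\geq\lambda_i\})$.

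The union counterpart then follows by applying the intersection result to the reflected processes $(-X_i)$ and $(-Y_i)$, whose pairwise covariances agree with those of $(X_i),(Y_i)$ so hypotheses (1) and (2) carry over unchanged, and then complementing via $P(\bigcup_i\{X_i\geq\lambda_i\})=1-P(\bigcap_i\{-X_i>-\lambda_i\})$. I expect the main technical hurdle to be the smoothing step: choosing $g_\varepsilon$ (for example a mollification of the Heaviside function, truncated at height one) so that $f_\varepsilon$ is regular enough to justify the Gaussian integration-by-parts identity while being uniformly bounded so that dominated convergence legitimizes interchanging $\varepsilon\downarrow 0$ with the expectation and with the $t$-integration. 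Once that approximation bookkeeping is in place, the entire proof is driven by the single integration-by-parts computation above.
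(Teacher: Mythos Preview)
The paper does not actually prove this theorem; it is quoted from the cited references and used as a black box. Your interpolation argument for the intersection inequality is exactly the classical proof of Slepian's lemma, and it is correct as sketched: the Gaussian integration-by-parts identity together with the off-diagonal nonnegativity of $\partial_i\partial_j f_\varepsilon$ and hypothesis~(2) drives the monotonicity, and the smoothing/limiting step is routine. So for the intersection half there is nothing to compare against, and your approach is the standard one from the literature.

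There is, however, a genuine issue with the union half. Your reflection-plus-complementation idea is the right mechanism, but if you carry it through carefully it yields the \emph{opposite} inequality to the one printed in the statement. Applying the intersection result to $(-X_i),(-Y_i)$ (whose covariances coincide with those of $(X_i),(Y_i)$, so the hypotheses transfer) gives
\[
P\Bigl(\bigcap_i\{-X_i\geq -\lambda_i\}\Bigr)\leq P\Bigl(\bigcap_i\{-Y_i\geq -\lambda_i\}\Bigr),
\]
and complementing then produces $P(\bigcup_i\{X_i\geq\lambda_i\})\geq P(\bigcup_i\{Y_i\geq\lambda_i\})$, not $\leq$. In fact the $\leq$ version as printed is false: take $n=2$, $X_1,X_2$ independent standard normal and $Y_1=Y_2$ standard normal; the hypotheses hold, yet $P(X_1\geq 0\ \text{or}\ X_2\geq 0)=3/4>1/2=P(Y_1\geq 0\ \text{or}\ Y_2\geq 0)$. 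So the displayed union inequality carries a sign slip in the paper; your argument proves the correct (reversed) form, but you should not claim to have established the statement exactly as written.
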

The following, more simpler, version of the above theorem relates to the expected values.
\begin{theorem}(\cite{Slep62,Gordon88})
\label{thm:Slepian2} Let $X_{i}$ and $Y_{i}$, $1\leq i\leq n$, be two centered Gaussian processes which satisfy the following inequalities for all choices of indices
\begin{enumerate}
\item $E(X_{i}^2)=E(Y_{i}^2)$
\item $E(X_{i}X_{l})\leq E(Y_{i}Y_{l}), i\neq l$.
\end{enumerate}
Then
\begin{equation*}
E(\min_{i}(X_{i}))\leq E(\min_i(Y_{i})) \Leftrightarrow E(\max_{i}(X_{i}))\geq E(\max_i(Y_{i})).
\end{equation*}
\end{theorem}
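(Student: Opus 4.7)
The plan is to derive Theorem \ref{thm:Slepian2} from the already-stated Theorem \ref{thm:Slepian1} by a tail-integration argument, together with a simple reflection trick to handle the max half of the conclusion.

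First, I would specialize the intersection-of-events form of Theorem \ref{thm:Slepian1} to the case where all thresholds coincide, namely $\lambda_i = t$ for a single real number $t$. Since $\bigcap_i \{X_i \geq t\} = \{\min_i X_i \geq t\}$, this specialization immediately yields
\[
P(\min_i X_i \geq t) \;\leq\; P(\min_i Y_i \geq t) \qquad \text{for every } t \in \mathbb{R},
\]
so $\min_i X_i$ is stochastically dominated by $\min_i Y_i$.

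Next, I would invoke the standard layer-cake / tail-integral representation
\[
E(W) \;=\; \int_0^{\infty} P(W \geq t)\,dt \;-\; \int_{-\infty}^{0} \bigl(1 - P(W \geq t)\bigr)\,dt,
\]
which holds for any integrable random variable $W$. Applying this with $W = \min_i X_i$ and $W = \min_i Y_i$ and inserting the pointwise inequality from the previous step, both pieces of the integral push in the same direction and give $E(\min_i X_i) \leq E(\min_i Y_i)$. Integrability is automatic because centered Gaussians have all moments finite and the minimum of finitely many of them inherits this property.

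Finally, the equivalent max statement follows from the trivial duality $\max_i X_i = -\min_i(-X_i)$. The reflected processes $\tilde X_i := -X_i$ and $\tilde Y_i := -Y_i$ are again centered Gaussians with exactly the same covariance data as the originals, since $E(\tilde X_i \tilde X_l) = E(X_i X_l)$, and likewise for $Y$, as well as $E(\tilde X_i^2) = E(\tilde Y_i^2)$. Hence the min half of the result applies to them and yields $E(-\max_i X_i) \leq E(-\max_i Y_i)$, i.e.\ $E(\max_i X_i) \geq E(\max_i Y_i)$. There is no substantive obstacle here — the whole derivation is routine given Theorem \ref{thm:Slepian1}; the only point that requires a moment's bookkeeping is tracking signs when combining the positive-$t$ and negative-$t$ halves of the tail integral so that the inequality ends up pointing in the claimed direction.
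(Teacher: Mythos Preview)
Your derivation is correct and is the standard route from the probability form (Theorem \ref{thm:Slepian1}) to the expectation form: specialize all thresholds to a common $t$ to get stochastic dominance of the minima, integrate the tails, and then reflect to obtain the max inequality. Note, however, that the paper does not give its own proof of this statement; Theorem \ref{thm:Slepian2} is simply quoted from the cited references \cite{Slep62,Gordon88} as a known tool, so there is no in-paper argument to compare against. Your proof is a valid and self-contained justification of the quoted result.
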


Now, to create a lower-bounding strategy for the asymmetric Little form we will rely on Theorems \ref{thm:Slepian1} and Theorem \ref{thm:Slepian2}. We start by reformulating the problem in (\ref{eq:sqrtposham1}) in the following way
\begin{equation}
\xi_p=\max_{\x\in\{-\frac{1}{\sqrt{n}},\frac{1}{\sqrt{n}}\}^n}\max_{\y\in\{-\frac{1}{\sqrt{m}},\frac{1}{\sqrt{m}}\}^m}\y^TH\x.\label{eq:sqrtposham2}
\end{equation}
We will first focus on the expected value of $\xi_p$ and then on its more general probabilistic properties. The following is then a direct application of Theorem \ref{thm:Slepian2}.
\begin{lemma}
Let $H$ be an $m\times n$ matrix with i.i.d. standard normal components. Let $H^{(1)}$ and $H^{(2)}$ be $m\times m$ and $n\times n$ matrices, respectively, with i.i.d. standard normal components. Then
\begin{equation}
E(\max_{\x\in\{-\frac{1}{\sqrt{n}},\frac{1}{\sqrt{n}}\}^n,\y\in\{-\frac{1}{\sqrt{m}},\frac{1}{\sqrt{m}}\}^m}(\y^T H\x))\geq E(\max_{\x\in\{-\frac{1}{\sqrt{n}},\frac{1}{\sqrt{n}}\}^n,\y\in\{-\frac{1}{\sqrt{m}},\frac{1}{\sqrt{m}}\}^m}
(\frac{1}{\sqrt{2}}\y^TH^{(1)}\y+\frac{1}{\sqrt{2}}\x^TH^{(2)}\x)).\label{eq:posexplemmalb}
\end{equation}\label{lemma:posexplemmalb}
\end{lemma}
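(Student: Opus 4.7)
The plan is to invoke Theorem \ref{thm:Slepian2} (the Slepian/Gordon comparison for expected maxima) with index set equal to the discrete product $\{-1/\sqrt n,1/\sqrt n\}^n\times\{-1/\sqrt m,1/\sqrt m\}^m$. Introduce two centered Gaussian processes indexed by pairs $(\x,\y)$ in this set:
\begin{equation*}
X_{(\x,\y)} := \y^T H \x, \qquad Y_{(\x,\y)} := \tfrac{1}{\sqrt 2}\,\y^T H^{(1)} \y + \tfrac{1}{\sqrt 2}\,\x^T H^{(2)} \x,
\end{equation*}
with $H$, $H^{(1)}$, $H^{(2)}$ taken mutually independent. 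The two hypotheses of Theorem \ref{thm:Slepian2} to be checked are then: (i) equality of variances at every index, and (ii) the covariance inequality $E(X_{(\x,\y)}X_{(\x',\y')})\le E(Y_{(\x,\y)}Y_{(\x',\y')})$ at every pair of distinct indices.

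The variance check is routine: since $\|\x\|^2=\|\y\|^2=1$ on the feasible set, a componentwise expansion gives $E(X_{(\x,\y)}^2)=\|\y\|^2\|\x\|^2=1$; and independence of $H^{(1)},H^{(2)}$ kills the cross term in $Y^2$, yielding $E(Y_{(\x,\y)}^2)=\tfrac12\|\y\|^4+\tfrac12\|\x\|^4=1$. The covariance computation is the only substantive step. A direct Gaussian moment calculation gives $E(X_{(\x,\y)}X_{(\x',\y')})=(\y^T\y')(\x^T\x')$, and, once more by independence of $H^{(1)},H^{(2)}$, $E(Y_{(\x,\y)}Y_{(\x',\y')})=\tfrac12(\y^T\y')^2+\tfrac12(\x^T\x')^2$. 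Setting $a=\y^T\y'$ and $b=\x^T\x'$, the required inequality reduces to the elementary $ab\le\tfrac12(a^2+b^2)$, i.e.\ $(a-b)^2\ge 0$.

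With both hypotheses verified, Theorem \ref{thm:Slepian2} applies and gives $E(\max_{(\x,\y)} X_{(\x,\y)})\ge E(\max_{(\x,\y)} Y_{(\x,\y)})$, which is precisely (\ref{eq:posexplemmalb}). The point most worth keeping straight — rather than a genuine obstacle — is the sign convention built into Slepian/Gordon: the process with the \emph{smaller} off-diagonal covariances produces the \emph{larger} expected maximum. Here the bilinear process $X$ has covariances equal to the single product $(\y^T\y')(\x^T\x')$, which is dominated by the symmetrised sum of squares appearing in the covariance of the decoupled process $Y$, so $X$ ends up on the left and $Y$ on the right. This is exactly the direction one wants, because $E\max Y$ splits across $\x$ and $\y$ and can then be treated by the standard SK-type machinery, producing a tractable lower bound on the ground state energy in the subsequent subsection.
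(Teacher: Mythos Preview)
Your proof is correct and follows exactly the approach the paper indicates: a direct application of Theorem \ref{thm:Slepian2} with the index set $\{-1/\sqrt n,1/\sqrt n\}^n\times\{-1/\sqrt m,1/\sqrt m\}^m$. The paper's own proof merely points to \cite{StojnicHopBnds10} and asserts that the argument there carries over verbatim, so you have in fact supplied the details the paper omits --- the variance check, the covariance computation $E(X_{(\x,\y)}X_{(\x',\y')})=(\y^T\y')(\x^T\x')$ versus $E(Y_{(\x,\y)}Y_{(\x',\y')})=\tfrac12(\y^T\y')^2+\tfrac12(\x^T\x')^2$, and the elementary inequality $ab\le\tfrac12(a^2+b^2)$ --- all of which are exactly what is needed.
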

\begin{proof}
The proof of a similar lemma is presented in \cite{StojnicHopBnds10} and is a direct application of Theorem \ref{thm:Slepian2}. The only thing that is now different is the set of allowed $\y$'s which structurally changes nothing in the proof.
\end{proof}

Using results of Lemma \ref{lemma:posexplemmalb} we then have
\begin{multline}
E(\max_{\x\in\{-\frac{1}{\sqrt{n}},\frac{1}{\sqrt{n}}\}^n,\y\in\{-\frac{1}{\sqrt{m}},\frac{1}{\sqrt{m}}\}^m} \y^T H\x) \geq
E(\max_{\x\in\{-\frac{1}{\sqrt{n}},\frac{1}{\sqrt{n}}\}^n,\y\in\{-\frac{1}{\sqrt{m}},\frac{1}{\sqrt{m}}\}^m}\frac{1}{\sqrt{2}}\y^TH^{(1)}\y+\frac{1}{\sqrt{2}}\x^TH^{(2)}\x)\\
=E(\max_{\y\in\{-\frac{1}{\sqrt{n}},\frac{1}{\sqrt{n}}\}^n}\frac{1}{\sqrt{2}}\y^TH^{(1)}\y)+E(\max_{\x\in\{-\frac{1}{\sqrt{n}},\frac{1}{\sqrt{n}}\}^n}\frac{1}{\sqrt{2}}\x^TH^{(2)}\x),\label{eq:poshopaftlemma2lb}
\end{multline}
and after scaling
\begin{multline}
\frac{E(\max_{\x\in\{-\frac{1}{\sqrt{n}},\frac{1}{\sqrt{n}}\}^n,\y\in\{-\frac{1}{\sqrt{m}},\frac{1}{\sqrt{m}}\}^m}(\y^T H\x))}{\sqrt{n}}\\\geq
\frac{E(\max_{\y\in\{-\frac{1}{\sqrt{m}},\frac{1}{\sqrt{m}}\}^m}(\y^TH^{(1)}\y))}{\sqrt{2n}}+\frac{E(\max_{\x\in\{-\frac{1}{\sqrt{n}},\frac{1}{\sqrt{n}}\}^n}\x^TH^{(2)}\x))}{\sqrt{2n}}.\label{eq:poshopaftlemma3lb}
\end{multline}
Now, using incredible results of \cite{Parisi80,Tal06,Guerra03} one has
\begin{eqnarray}
\lim_{n\rightarrow \infty}\frac{E(\max_{\x\in\{-\frac{1}{\sqrt{n}},\frac{1}{\sqrt{n}}\}^n}\x^TH^{(2)}\x))}{\sqrt{2n}} & = & \xi_{SK}\approx 0.763\nonumber \\
\lim_{n\rightarrow \infty}\frac{E(\max_{\y\in\{-\frac{1}{\sqrt{m}},\frac{1}{\sqrt{m}}\}^n}\y^TH^{(1)}\y))}{\sqrt{2n}} & = & \sqrt{\alpha}\xi_{SK}\approx \sqrt{\alpha}\times 0.763,\label{eq:skmodel}
\end{eqnarray}
where $\xi_{SK}$ is the average ground state energy of the so-called Sherrington-Kirkpatrick (SK) model in the thermodynamic limit (more on the SK model can be found in excellent references \cite{Parisi80,Tal06,Guerra03,SheKir72}). We do mention that the work of \cite{Parisi80,Tal06,Guerra03} indeed settled the thermodynamic behavior of the SK model. However, the characterization of $\xi_{SK}$ in \cite{Parisi80,Tal06,Guerra03} is not explicit and the value we give in (\ref{eq:skmodel}) is a numerical estimate (it is quite likely though, that the estimate we give is a bit conservative; the true value is probably more around $0.7632$). Connecting (\ref{eq:poshopaftlemma3lb}) and (\ref{eq:skmodel}) one then has the following lower-bound of the ground state energy of the asymmetric Little model
\begin{equation}
\lim_{n\rightarrow\infty}\frac{E\xi_p}{\sqrt{n}}=\lim_{n\rightarrow\infty}\frac{E(\max_{\x\in\{-\frac{1}{\sqrt{n}},\frac{1}{\sqrt{n}}\}^n,
\y\in\{-\frac{1}{\sqrt{m}},\frac{1}{\sqrt{m}}\}^m} \y^T H\x)}{\sqrt{n}} \geq \sqrt{\alpha}\xi_{SK}+\xi_{SK}\approx (\sqrt{\alpha}+1)\times 0.763.\label{eq:poshopubexplb}
\end{equation}
The above lower bound (apart from a scaling factor $(\sqrt{\alpha}+1)$) pretty much matches the ground state energy of the SK model. It is also relatively straightforward to see how the scaling factor comes about. In fact there are many ways how it can be done. However, it is neat to have it presented at some point so we will take the opportunity to briefly sketch the idea right here.

Namely, let $H^{(2,s)}$ be a random symmetric matrix (i.e. $H_{ij}^{(2,s)}=H_{ji}^{(2,s)},\forall i,j$) with i.i.d. standard normal components. Typically the ground state energy of the SK model is then defined as
\begin{eqnarray}
\xi_{SK}& = & \lim_{n\rightarrow\infty}\frac{E(\max_{\x\in\{-\frac{1}{\sqrt{n}},\frac{1}{\sqrt{n}}\}^n} \sum_{i=1}^{n}\sum_{j=1,j>i}^{n}H_{ij}^{(2,s)}\x_i\x_j)}{\sqrt{n}}\nonumber \\
& = & \lim_{n\rightarrow\infty}\frac{E(\max_{\x\in\{-\frac{1}{\sqrt{n}},\frac{1}{\sqrt{n}}\}^n} \sum_{i=1}^{n}\sum_{j=1,j\neq i}^n H_{ij}^{(2,s)}\x_i\x_j)}{2\sqrt{n}}.\label{eq:defskgreng}
\end{eqnarray}
This of course comes from the definition of the Hamiltonian that views the quenched interactions only in one direction, or in other words views them symmetrically. Let (as in Lemma \ref{lemma:posexplemmalb}) $H^{(2)}$ be an $n\times n$ matrix with i.i.d. standard normal entries. Then
\begin{eqnarray}
\hspace{-.5in}\lim_{n\rightarrow\infty}\frac{E(\max_{\x\in\{-\frac{1}{\sqrt{n}},\frac{1}{\sqrt{n}}\}^n} \sum_{i=1}^{n}\sum_{j=1}^{n}H_{ij}^{(2)}\x_i\x_j)}{\sqrt{2n}}& = & \lim_{n\rightarrow\infty}\frac{E(\max_{\x\in\{-\frac{1}{\sqrt{n}},\frac{1}{\sqrt{n}}\}^n} \sum_{i=1}^{n}\sum_{j=1,j\neq i}^{n}H_{ij}^{(2)}\x_i\x_j)}{\sqrt{2n}}\nonumber \\
& = & \lim_{n\rightarrow\infty}\frac{E(\max_{\x\in\{-\frac{1}{\sqrt{n}},\frac{1}{\sqrt{n}}\}^n} \sum_{i=1}^{n}\sum_{j=1,j\neq i}^{n}(H_{ij}^{(2)}+(H_{ji}^{(2)}))\x_i\x_j)}{2\sqrt{2n}} \nonumber \\
& = & \lim_{n\rightarrow\infty}\frac{E(\max_{\x\in\{-\frac{1}{\sqrt{n}},\frac{1}{\sqrt{n}}\}^n} \sum_{i=1}^{n}\sum_{j=1,j\neq i}^{n}(H_{ij}^{(2,s)}\x_i\x_j)}{2\sqrt{n}},\label{eq:defskgreng1}
\end{eqnarray}
where statistically (in fact, even literally) speaking one can write
\begin{equation*}
H_{ij}^{(2,s)}=\frac{H_{ij}^{(2)}+H_{ji}^{(2)}}{\sqrt{2}},i\neq j.
\end{equation*}
Combining (\ref{eq:defskgreng}) and (\ref{eq:defskgreng1}) one essentially has
\begin{eqnarray}
\lim_{n\rightarrow\infty}\frac{E(\max_{\x\in\{-\frac{1}{\sqrt{n}},\frac{1}{\sqrt{n}}\}^n} \x^TH^{(2)}\x)}{\sqrt{2n}}=\lim_{n\rightarrow\infty}\frac{E(\max_{\x\in\{-\frac{1}{\sqrt{n}},\frac{1}{\sqrt{n}}\}^n} \sum_{i=1}^{n}\sum_{j=1}^{n}H_{ij}^{(2)}\x_i\x_j)}{\sqrt{2n}}=\xi_{SK},\label{eq:defskgreng2}
\end{eqnarray}
which is exactly what is stated in the first part of (\ref{eq:skmodel}). The second part of (\ref{eq:skmodel}) follows in an analogous fashion. The only difference is possible mismatch between $m$ and $n$ which results in an extra scaling factor of $\sqrt{\alpha}$. The results of (\ref{eq:skmodel}) then easily imply (\ref{eq:poshopubexplb}) as shown above.

We now turn to deriving a more general probabilistic result related to $\xi_p$. We will do so through the following lemma (essentially a direct consequence of Theorem \ref{thm:Slepian1}).
\begin{lemma}
Let $H$ be an $m\times n$ matrix with i.i.d. standard normal components. Let $H^{(1)}$ and $H^{(2)}$ be $m\times m$ and $n\times n$ matrices, respectively, with i.i.d. standard normal components. Let $\zeta$ be a scalar. Then
\begin{equation}
\hspace{-.7in}P(\max_{\x\in\{-\frac{1}{\sqrt{n}},\frac{1}{\sqrt{n}}\}^n,\y\in\{-\frac{1}{\sqrt{m}},\frac{1}{\sqrt{m}}\}^m}(\y^T A\x-\zeta)\geq 0)\geq
P(\max_{\x\in\{-\frac{1}{\sqrt{n}},\frac{1}{\sqrt{n}}\}^n,\y\in\{-\frac{1}{\sqrt{m}},\frac{1}{\sqrt{m}}\}^m}
(\frac{1}{\sqrt{2}}\y^TH^{(1)}\y+\frac{1}{\sqrt{2}}\x^TH^{(2)}\x)-\zeta)\geq 0).\label{eq:posproblemmalb}
\end{equation}\label{lemma:posproblemmalb}
\end{lemma}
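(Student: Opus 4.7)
The plan is to prove the lemma by a direct application of Theorem \ref{thm:Slepian1}, in exact parallel with how Lemma \ref{lemma:posexplemmalb} invoked Theorem \ref{thm:Slepian2}. I will introduce two centered Gaussian processes indexed by pairs $(\x,\y)$ ranging over the discrete product set $\{-\tfrac{1}{\sqrt{n}}, \tfrac{1}{\sqrt{n}}\}^n \times \{-\tfrac{1}{\sqrt{m}}, \tfrac{1}{\sqrt{m}}\}^m$, namely
\[
X_{\x,\y} \;=\; \y^T H \x, \qquad Y_{\x,\y} \;=\; \tfrac{1}{\sqrt{2}}\, \y^T H^{(1)} \y \;+\; \tfrac{1}{\sqrt{2}}\, \x^T H^{(2)} \x.
\]
Taking all thresholds $\lambda_i$ in Theorem \ref{thm:Slepian1} equal to the scalar $\zeta$, the two sides of (\ref{eq:posproblemmalb}) become exactly $P\!\left(\bigcup_{(\x,\y)}\{X_{\x,\y}\ge \zeta\}\right)$ and $P\!\left(\bigcup_{(\x,\y)}\{Y_{\x,\y}\ge \zeta\}\right)$, so it suffices to verify the two moment hypotheses of Theorem \ref{thm:Slepian1} with this indexing.

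For the variances, since $\sum_j x_j^2 = 1$ and $\sum_i y_i^2 = 1$ for every admissible $\x,\y$, the orthogonality $E(H_{ij} H_{kl}) = \delta_{ik}\delta_{jl}$ yields $E(X_{\x,\y}^2) = (\sum_i y_i^2)(\sum_j x_j^2) = 1$. For $Y_{\x,\y}$, the cross term vanishes because $H^{(1)}$ and $H^{(2)}$ are independent and centered, so $E(Y_{\x,\y}^2) = \tfrac{1}{2}\bigl((\sum_i y_i^2)^2 + (\sum_j x_j^2)^2\bigr) = 1$, matching. For distinct index pairs $(\x,\y)\ne(\x',\y')$, the same independence arguments give
\[
E(X_{\x,\y} X_{\x',\y'}) \;=\; (\y^T \y')(\x^T \x'), \qquad E(Y_{\x,\y} Y_{\x',\y'}) \;=\; \tfrac{1}{2}\bigl[(\y^T \y')^2 + (\x^T \x')^2\bigr].
\]
The required covariance inequality then reduces to the elementary bound $ab \le (a^2+b^2)/2$ applied with $a = \y^T \y'$ and $b = \x^T \x'$, which is valid for all real numbers.

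Once both hypotheses are in hand, the conclusion (\ref{eq:posproblemmalb}) follows immediately from Theorem \ref{thm:Slepian1}. The main obstacle is really no obstacle at all: the only delicate step is expanding the cross-moment of the quadratic forms, e.g.\
\[
E\bigl(\y^T H^{(1)} \y \cdot (\y')^T H^{(1)} \y'\bigr) \;=\; \sum_{i,j,k,l} E(H^{(1)}_{ij} H^{(1)}_{kl})\, y_i y_j y'_k y'_l \;=\; (\y^T \y')^2,
\]
with the analogous identity for $\x^T H^{(2)} \x$. Everything else is book-keeping, and, as noted after Lemma \ref{lemma:posexplemmalb}, the fact that the index set is a Cartesian product of two spin hypercubes (rather than a single one) plays no role beyond redefining the covariance calculation.
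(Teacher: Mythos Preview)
Your proposal is correct and follows exactly the route the paper takes: the paper's proof is the single sentence ``the proof is basically a direct consequence of Theorem~\ref{thm:Slepian1},'' and what you have written is precisely the verification of the two moment hypotheses of that theorem for the processes $X_{\x,\y}=\y^T H\x$ and $Y_{\x,\y}=\tfrac{1}{\sqrt{2}}\y^T H^{(1)}\y+\tfrac{1}{\sqrt{2}}\x^T H^{(2)}\x$, together with the AM--GM step $ab\le(a^2+b^2)/2$. This is the same argument that underlies Lemma~\ref{lemma:posexplemmalb} (there via Theorem~\ref{thm:Slepian2}), so there is nothing to add.
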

\begin{proof}
As mentioned above, the proof is basically a direct consequence of Theorem \ref{thm:Slepian1}.
\end{proof}

Assuming all $\epsilon$'s are arbitrarily small positive constants, setting $\zeta=\xi_p^{(l)}=(1-\epsilon_1^{(m_{sk})})\sqrt{m}+(1-\epsilon_1^{(n_{sk})}\xi_{SK})\sqrt{n}$, and following step by step what was done in \cite{StojnicHopBnds10} (the only difference being the structure of the set of allowed $\y$'s) one arrives at
\begin{multline}
P(\max_{\x\in\{-\frac{1}{\sqrt{n}},\frac{1}{\sqrt{n}}\}^n,\y\in\{-\frac{1}{\sqrt{m}},\frac{1}{\sqrt{m}}\}^m}\y^T H\x\geq \xi_p^{(l)})\\\geq
\lim_{n\rightarrow\infty}P(\max_{\x\in\{-\frac{1}{\sqrt{n}},\frac{1}{\sqrt{n}}\}^n,\y\in\{-\frac{1}{\sqrt{m}},\frac{1}{\sqrt{m}}\}^m}(\frac{1}{\sqrt{2}}\y^TH^{(1)}\y+\frac{1}{\sqrt{2}}\x^TH^{(2)}\x)-\xi_p^{(l)}\geq 0)\geq 1.\label{eq:probanal5lb}
\end{multline}

We summarize our results from this subsection in the following lemma.

\begin{lemma}
Let $H$ be an $m\times n$ matrix with i.i.d. standard normal components. Let $n$ be large and let $m=\alpha n$, where $\alpha>0$ is a constant independent of $n$. Let $\xi_p$ be as in (\ref{eq:sqrtposham1}). Let $\xi_{SK}$ be the average ground state energy in the thermodynamic limit of the SK model as defined in (\ref{eq:skmodel}). Further, let all $\epsilon$'s be arbitrarily small constants independent of $n$ and let $\xi_p^{(l)}$ be a scalar such that
\begin{equation}
\frac{\xi_p^{(l)}}{\sqrt{n}}= (1-\epsilon_1^{(m_{sk})})\sqrt{\alpha}+(1-\epsilon_1^{(n_{sk})})\xi_{SK}.\label{eq:condxipuposgenlemmalb}
\end{equation}
Then
\begin{eqnarray}
& & \lim_{n\rightarrow\infty}P(\max_{\x\in\{-\frac{1}{\sqrt{n}},\frac{1}{\sqrt{n}}\}^n,\y\in\{-\frac{1}{\sqrt{m}},\frac{1}{\sqrt{m}}\}^m}\y^T H\x)\geq \xi_p^{(l)})\geq 1\nonumber \\
& \Leftrightarrow & \lim_{n\rightarrow\infty}P(\xi_p\geq \xi_p^{(l)})\geq 1 \nonumber \\
& \Leftrightarrow & \lim_{n\rightarrow\infty}P(\xi_p^2\geq (\xi_p^{(l)})^2)\geq 1, \label{eq:posgenproblemmalb}
\end{eqnarray}
and
\begin{equation}
\lim_{n\rightarrow\infty}\frac{E\xi_p}{\sqrt{n}}=\lim_{n\rightarrow\infty}\frac{E(\max_{\x\in\{-\frac{1}{\sqrt{n}},\frac{1}{\sqrt{n}}\}^n
,\y\in\{-\frac{1}{\sqrt{m}},\frac{1}{\sqrt{m}}\}^m} \y^T H\x)}{\sqrt{n}} \geq (\sqrt{\alpha}+1)\xi_{SK}\approx (\sqrt{\alpha}+1)\times 0.763.\label{eq:posgenexplemmalb}
\end{equation}
\label{lemma:posgenlemmalb}
\end{lemma}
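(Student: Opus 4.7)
The plan is to assemble the lemma directly from Lemmas \ref{lemma:posexplemmalb} and \ref{lemma:posproblemmalb} (which package Theorems \ref{thm:Slepian1} and \ref{thm:Slepian2}), the Parisi/Talagrand/Guerra characterization of the SK ground state energy in (\ref{eq:skmodel}), and standard Gaussian concentration of measure to upgrade expectation statements to high-probability statements.

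First I would confirm that the two Gaussian processes
$X_{\x,\y}=\y^T H\x$ and $Y_{\x,\y}=\tfrac{1}{\sqrt{2}}\y^T H^{(1)}\y+\tfrac{1}{\sqrt{2}}\x^T H^{(2)}\x$, indexed by admissible pairs $(\x,\y)$, satisfy the Slepian hypotheses. Since $\|\x\|=\|\y\|=1$ on the admissible cube, one has $E(X_{\x,\y}^2)=\|\x\|^2\|\y\|^2=1$ and $E(Y_{\x,\y}^2)=\tfrac12\|\y\|^4+\tfrac12\|\x\|^4=1$. For the cross terms,
\begin{equation*}
E(X_{\x,\y}X_{\x',\y'})=(\y^T\y')(\x^T\x'),\qquad E(Y_{\x,\y}Y_{\x',\y'})=\tfrac12(\y^T\y')^2+\tfrac12(\x^T\x')^2,
\end{equation*}
so the elementary inequality $ab\le\tfrac12(a^2+b^2)$ gives the covariance comparison. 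This is exactly what justifies Lemmas \ref{lemma:posexplemmalb} and \ref{lemma:posproblemmalb}, and needs no additional argument.

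Next, the maximum of $Y_{\x,\y}$ separates into two independent SK-type problems. Dividing by $\sqrt{n}$, using $m=\alpha n$, and invoking the Parisi/Talagrand/Guerra result (\ref{eq:skmodel}) for each piece yields
\begin{equation*}
\lim_{n\to\infty}\frac{E\xi_p}{\sqrt n}\;\ge\;\lim_{n\to\infty}\frac{1}{\sqrt{2n}}E\max_\y \y^T H^{(1)}\y+\lim_{n\to\infty}\frac{1}{\sqrt{2n}}E\max_\x \x^T H^{(2)}\x\;=\;\sqrt{\alpha}\,\xi_{SK}+\xi_{SK},
\end{equation*}
which establishes (\ref{eq:posgenexplemmalb}). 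For the probabilistic statement (\ref{eq:posgenproblemmalb}) I would set $\zeta=\xi_p^{(l)}$ in Lemma \ref{lemma:posproblemmalb}, so that it suffices to show
\begin{equation*}
P\!\left(\tfrac{1}{\sqrt2}\max_\y\y^T H^{(1)}\y+\tfrac{1}{\sqrt2}\max_\x\x^T H^{(2)}\x\;\ge\;\xi_p^{(l)}\right)\to 1.
\end{equation*}
Each of $\max_\y \y^T H^{(1)}\y$ and $\max_\x \x^T H^{(2)}\x$ is (up to a harmless constant) a $1$-Lipschitz function of the Gaussian entries of $H^{(1)}$, $H^{(2)}$, because the subgradient at any maximizer $\x^*$ is the rank-one matrix $\x^*(\x^*)^T$ of unit Frobenius norm. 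Borell-type Gaussian concentration then gives exponential concentration around the mean, which combined with (\ref{eq:skmodel}) shows that each decoupled maximum is at least $(1-\epsilon)\sqrt{n}\,\xi_{SK}$ (respectively $(1-\epsilon)\sqrt{n\alpha}\,\xi_{SK}$) with probability $\to 1$. Adding and applying Lemma \ref{lemma:posproblemmalb} delivers the first line of (\ref{eq:posgenproblemmalb}); the two equivalences in (\ref{eq:posgenproblemmalb}) are immediate once one notes that $\xi_p\ge 0$ (it is a maximum over a symmetric set).

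No step is really an obstacle: the heavy lifting — the value $\xi_{SK}$ itself — is imported as a black box from \cite{Parisi80,Tal06,Guerra03}, and the Slepian/Gordon comparison has already been packaged in Lemmas \ref{lemma:posexplemmalb} and \ref{lemma:posproblemmalb}. The only care needed is in the bookkeeping of the $\sqrt{n}$, $\sqrt{m}=\sqrt{\alpha n}$ and $\sqrt{2}$ normalizations when matching each decoupled piece to the corresponding SK limit, and in citing a standard Gaussian Lipschitz concentration inequality with the correct constant.
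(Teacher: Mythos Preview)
Your proposal is correct and follows essentially the same approach as the paper: invoke the Slepian comparison packaged in Lemmas \ref{lemma:posexplemmalb} and \ref{lemma:posproblemmalb}, decouple into two independent SK-type problems, import (\ref{eq:skmodel}) from \cite{Parisi80,Tal06,Guerra03} for the expectation bound, and upgrade to the probability statement via Gaussian Lipschitz concentration of each decoupled maximum. The paper's own proof is a one-line pointer to exactly this discussion, so your write-up simply makes explicit the covariance computation and the concentration step that the paper leaves to \cite{StojnicHopBnds10}.
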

\begin{proof}
The proof follows from the above discussion, (\ref{eq:poshopubexplb}), and (\ref{eq:probanal5lb}).
\end{proof}

\noindent \textbf{Remark:} One can of course be a bit more specific as to in what way the above probabilities are converging as $n\rightarrow\infty$. However, to make writing as neat as possible we choose probably the simplest way to characterize it.

\subsection{Upper-bounding ground state energy of the asymmetric Little form}
\label{sec:poshopub}

In this subsection we will create the corresponding upper-bound results. To create an upper-bounding strategy for the asymmetric Little form we will again (as in the previous subsection) rely on Theorems \ref{thm:Slepian1} and \ref{thm:Slepian2}. We start by recalling what the problem of interest is. Namely, we will be interested in providing a characterizing of an upper bound on the typical behavior of $\xi_p$
\begin{equation}
\xi_p=\max_{\x\in\{-\frac{1}{\sqrt{n}},\frac{1}{\sqrt{n}}\}^n}\max_{\y\in\{-\frac{1}{\sqrt{m}},\frac{1}{\sqrt{m}}\}^m}\y^TH\x.\label{eq:sqrtposham2lb}
\end{equation}
As in the previous subsection, we will first focus on the expected value of $\xi_p$ and then on its more general probabilistic properties. We will closely follow what was done in the previous subsection and to a degree what was done earlier in \cite{StojnicHopBnds10}. We start by recalling on the following direct application of Theorem \ref{thm:Slepian2}.

\begin{lemma}
Let $H$ be an $m\times n$ matrix with i.i.d. standard normal components. Let $\g$ and $\h$ be $m\times 1$ and $n\times 1$ vectors, respectively, with i.i.d. standard normal components. Also, let $g$ be a standard normal random variable. Then
\begin{equation}
E(\max_{\x\in\{-\frac{1}{\sqrt{n}},\frac{1}{\sqrt{n}}\}^n,\y\in\{-\frac{1}{\sqrt{m}},\frac{1}{\sqrt{m}}\}^m}(\y^T H\x + g))\leq E(\max_{\x\in\{-\frac{1}{\sqrt{n}},\frac{1}{\sqrt{n}}\}^n,\y\in\{-\frac{1}{\sqrt{m}},\frac{1}{\sqrt{m}}\}^m}(\g^T\y+\h^T\x)).\label{eq:posexplemma}
\end{equation}\label{lemma:posexplemma}
\end{lemma}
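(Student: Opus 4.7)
The plan is to apply Theorem~\ref{thm:Slepian2} directly to the two zero-mean Gaussian processes
\[
X_{\x,\y} := \y^T H \x + g, \qquad Y_{\x,\y} := \g^T \y + \h^T \x,
\]
indexed by the pairs $(\x,\y) \in \{-\tfrac{1}{\sqrt{n}},\tfrac{1}{\sqrt{n}}\}^n \times \{-\tfrac{1}{\sqrt{m}},\tfrac{1}{\sqrt{m}}\}^m$. The auxiliary scalar $g$ (taken independent of $H, \g, \h$) is there only as a technical device to equalize the second moments of the two processes; once the Slepian/Gordon comparison has been invoked, it can be discarded.

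First I would check the variance condition. Since every admissible $\x$ and $\y$ satisfies $\|\x\|^2 = \|\y\|^2 = 1$, a direct computation using independence of the entries of $H$ gives
\[
E(X_{\x,\y}^2) \;=\; \|\y\|^2\,\|\x\|^2 + 1 \;=\; 2 \;=\; \|\y\|^2 + \|\x\|^2 \;=\; E(Y_{\x,\y}^2).
\]
Next I would compute the pairwise covariances. Using that the cross terms vanish (by independence of $g$ from $H$, and of $\g$ from $\h$), one finds
\[
E(X_{\x,\y} X_{\x',\y'}) \;=\; (\y^T \y')(\x^T \x') + 1, \qquad E(Y_{\x,\y} Y_{\x',\y'}) \;=\; \y^T \y' + \x^T \x',
\]
so the comparison reduces to the neat factorization
\[
E(X_{\x,\y} X_{\x',\y'}) - E(Y_{\x,\y} Y_{\x',\y'}) \;=\; (1 - \y^T \y')(1 - \x^T \x').
\]
Because $\|\x\|=\|\x'\|=\|\y\|=\|\y'\|=1$, Cauchy--Schwarz gives $\y^T\y'\leq 1$ and $\x^T\x'\leq 1$, so the product on the right is nonnegative. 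Hence $E(X_{\x,\y} X_{\x',\y'}) \geq E(Y_{\x,\y} Y_{\x',\y'})$ for every pair of indices.

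With the two hypotheses of Theorem~\ref{thm:Slepian2} in place (variances equal, and $Y$ having the \emph{smaller} off-diagonal covariances), the theorem applied with the roles of $X$ and $Y$ interchanged yields $E(\max_{\x,\y} X_{\x,\y}) \leq E(\max_{\x,\y} Y_{\x,\y})$. Finally, since $g$ is constant with respect to the maximization, $\max_{\x,\y}(\y^T H\x + g) = g + \max_{\x,\y} \y^T H \x$, and $E(g)=0$ leaves the left-hand side unchanged from $E(\max_{\x,\y} \y^T H\x)$. There is no real obstacle here; the only subtle point is getting the \emph{direction} of the Gordon comparison right, and recognizing that the $+g$ summand on the left is what makes the variances match so that Theorem~\ref{thm:Slepian2} can be invoked in the first place.
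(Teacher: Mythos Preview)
Your proposal is correct and follows exactly the route the paper indicates: a direct application of Theorem~\ref{thm:Slepian2} (Slepian's comparison) to the processes $X_{\x,\y}=\y^T H\x+g$ and $Y_{\x,\y}=\g^T\y+\h^T\x$, with the covariance check reducing to the factorization $(1-\y^T\y')(1-\x^T\x')\geq 0$. The paper's own proof simply declares this a ``standard/direct application'' of Theorem~\ref{thm:Slepian2} and refers to \cite{StojnicHopBnds10}; you have supplied the computation in full.
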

\begin{proof}
The proof is sketched in \cite{StojnicHopBnds10} for a slightly different setup and follows as a standard/direct application of Theorem \ref{thm:Slepian2}.
\end{proof}

Using results of Lemma \ref{lemma:posexplemma} we then have
\begin{multline}
E(\max_{\x\in\{-\frac{1}{\sqrt{n}},\frac{1}{\sqrt{n}}\}^n,\y\in\{-\frac{1}{\sqrt{m}},\frac{1}{\sqrt{m}}\}^m} \y^T H\x) =E(\max_{\x\in\{-\frac{1}{\sqrt{n}},\frac{1}{\sqrt{n}}\}^n,\y\in\{-\frac{1}{\sqrt{m}},\frac{1}{\sqrt{m}}\}^m}(\y^T H\x +g))\\\leq E(\max_{\x\in\{-\frac{1}{\sqrt{n}},\frac{1}{\sqrt{n}}\}^n,\y\in\{-\frac{1}{\sqrt{m}},\frac{1}{\sqrt{m}}\}^m}(\g^T\y+\h^T\x))=E\sum_{i=1}^{n}|\g_i|+E\sum_{i=1}^{n}|\h_i|\leq \sqrt{\frac{2}{\pi}}\sqrt{m}+\sqrt{\frac{2}{\pi}}\sqrt{n}.\label{eq:poshopaftlemma2}
\end{multline}
Connecting beginning and end of (\ref{eq:poshopaftlemma2}) we finally have an upper bound on $E\xi_p$ from (\ref{eq:sqrtposham1}), i.e.
\begin{equation}
E\xi_p=E(\max_{\x\in\{-\frac{1}{\sqrt{n}},\frac{1}{\sqrt{n}}\}^n,\y\in\{-\frac{1}{\sqrt{m}},\frac{1}{\sqrt{m}}\}^m} \y^T H\x) \leq \sqrt{\frac{2}{\pi}}\sqrt{m}+\sqrt{\frac{2}{\pi}}\sqrt{n}=\sqrt{n}(\sqrt{\alpha}+1)\sqrt{\frac{2}{\pi}},\label{eq:poshopubexp}
\end{equation}
or in a scaled (possibly) more convenient form
\begin{equation}
\frac{E\xi_p}{\sqrt{n}}=\frac{E(\max_{\x\in\{-\frac{1}{\sqrt{n}},\frac{1}{\sqrt{n}}\}^n,\y\in\{-\frac{1}{\sqrt{m}},\frac{1}{\sqrt{m}}\}^m} \y^T H\x)}{\sqrt{n}} \leq (\sqrt{\alpha}+1)\sqrt{\frac{2}{\pi}}.\label{eq:poshopubexp1}
\end{equation}

As in the previous subsection we now turn to deriving a more general probabilistic result related to $\xi_p$. We will do so through the following lemma.
\begin{lemma}
Let $H$ be an $m\times n$ matrix with i.i.d. standard normal components. Let $\g$ and $\h$ be $m\times 1$ and $n\times 1$ vectors, respectively, with i.i.d. standard normal components. Also, let $g$ be a standard normal random variable and let $\zeta_{\x}$ be a function of $\x$. Then
\begin{equation}
P(\max_{\x\in\{-\frac{1}{\sqrt{n}},\frac{1}{\sqrt{n}}\}^n,\y\in\{-\frac{1}{\sqrt{m}},\frac{1}{\sqrt{m}}\}^m}(\y^T A\x + g-\zeta_{\x})\geq 0)\leq
P(\max_{\x\in\{-\frac{1}{\sqrt{n}},\frac{1}{\sqrt{n}}\}^n,\y\in\{-\frac{1}{\sqrt{m}},\frac{1}{\sqrt{m}}\}^m}(\g^T\y+\h^T\x-\zeta_{\x})\geq 0).\label{eq:posproblemma}
\end{equation}\label{lemma:posproblemma}
\end{lemma}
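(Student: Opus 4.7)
The plan is to apply the union form of Theorem \ref{thm:Slepian1} directly to two centered Gaussian families indexed by the pair $(\x,\y)\in\{-\frac{1}{\sqrt{n}},\frac{1}{\sqrt{n}}\}^n\times\{-\frac{1}{\sqrt{m}},\frac{1}{\sqrt{m}}\}^m$, namely
\begin{equation*}
X_{\x,\y}=\y^T H\x + g,\qquad Y_{\x,\y}=\g^T\y+\h^T\x,
\end{equation*}
with $H,g,\g,\h$ mutually independent as in the statement. The events
$\{\max_{\x,\y}(\y^T H\x+g-\zeta_{\x})\ge 0\}$ and $\{\max_{\x,\y}(\g^T\y+\h^T\x-\zeta_{\x})\ge 0\}$
are exactly $\bigcup_{\x,\y}\{X_{\x,\y}\ge \zeta_{\x}\}$ and $\bigcup_{\x,\y}\{Y_{\x,\y}\ge \zeta_{\x}\}$, so once the two hypotheses of Theorem \ref{thm:Slepian1} are verified the conclusion of the lemma is immediate by choosing the thresholds $\lambda_{\x,\y}=\zeta_{\x}$ (which is allowed since Theorem \ref{thm:Slepian1} places no restriction on how the thresholds depend on the index).

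The first step is to check equal variances. Since $\|\x\|_2=\|\y\|_2=1$ and $H,g$ are independent,
$E X_{\x,\y}^2=\|\y\|^2\|\x\|^2 + 1 = 2$, while $E Y_{\x,\y}^2=\|\y\|^2+\|\x\|^2=2$. The second step is the covariance inequality for $(\x,\y)\ne(\x',\y')$. A short computation gives
\begin{equation*}
E(X_{\x,\y}X_{\x',\y'})=(\x^T\x')(\y^T\y')+1,\qquad E(Y_{\x,\y}Y_{\x',\y'})=\x^T\x'+\y^T\y',
\end{equation*}
so the required inequality $E(X_{\x,\y}X_{\x',\y'})\le E(Y_{\x,\y}Y_{\x',\y'})$ is equivalent to
$(1-\x^T\x')(1-\y^T\y')\ge 0$, which holds because Cauchy--Schwarz combined with $\|\x\|_2=\|\y\|_2=1$ forces $\x^T\x'\le 1$ and $\y^T\y'\le 1$.

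The only subtle point is the degenerate situation in which one of $\x=\x'$ or $\y=\y'$ holds (while the other fails); then one of the two factors vanishes and the other is nonpositive, so the product is still $\ge 0$ and the hypothesis is preserved. This mirrors exactly the argument sketched in \cite{StojnicHopBnds10} and already used in Lemma \ref{lemma:posexplemma} above, the only change being that we invoke the probabilistic Theorem \ref{thm:Slepian1} in place of its expectation counterpart Theorem \ref{thm:Slepian2}. I expect no genuine obstacle; the verification is routine, and writing out the proof reduces to recording the variance and covariance calculations and then quoting Theorem \ref{thm:Slepian1}.
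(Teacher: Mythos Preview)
Your approach is exactly the paper's: verify the two Slepian hypotheses for the processes $X_{\x,\y}=\y^TH\x+g$ and $Y_{\x,\y}=\g^T\y+\h^T\x$ and then invoke Theorem~\ref{thm:Slepian1} in place of Theorem~\ref{thm:Slepian2}, with thresholds $\lambda_{\x,\y}=\zeta_\x$. One cosmetic slip to fix when you write it up: since $E(XX')=(\x^T\x')(\y^T\y')+1$ and $E(YY')=\x^T\x'+\y^T\y'$, the covariance comparison that actually holds (and is the one needed so that the more correlated bilinear process has the stochastically smaller maximum) is $E(XX')\ge E(YY')$, which is what is equivalent to $(1-\x^T\x')(1-\y^T\y')\ge 0$; your factored inequality and its verification are correct, only the direction you wrote for $E(XX')$ versus $E(YY')$ is reversed (and in the degenerate case the non-vanishing factor is nonnegative, not nonpositive).
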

\begin{proof}
The proof is basically same as the proof of Lemma \ref{lemma:posexplemma}. The only difference is that instead of Theorem \ref{thm:Slepian2} it relies on Theorem \ref{thm:Slepian1}.
\end{proof}

Assume all $\epsilon$'s are positive arbitrarily small constants. Let $\zeta_{\x}=-\epsilon_{5}^{(g)}\sqrt{n}+\xi_p^{(u)}$ where $\xi_p^{(u)}$ is such that
\begin{eqnarray}
& & (1+\epsilon_{1}^{(m)})\sqrt{m}\sqrt{\frac{2}{\pi}}+(1+\epsilon_{1}^{(n)})\sqrt{n}\sqrt{\frac{2}{\pi}}+\epsilon_{5}^{(g)}\sqrt{n}<\xi_p^{(u)}\nonumber \\
& \Leftrightarrow & (1+\epsilon_{1}^{(m)})\sqrt{\alpha}\sqrt{\frac{2}{\pi}}+(1+\epsilon_{1}^{(n)})\sqrt{\frac{2}{\pi}}+\epsilon_{5}^{(g)}<\frac{\xi_p^{(u)}}{\sqrt{n}}.\label{eq:condxipu}
\end{eqnarray}
Following what was done in \cite{StojnicHopBnds10} one then has
\begin{multline}
\lim_{n\rightarrow\infty}P(\max_{\x\in\{-\frac{1}{\sqrt{n}},\frac{1}{\sqrt{n}}\}^n,\y\in\{-\frac{1}{\sqrt{m}},\frac{1}{\sqrt{m}}\}^m}(\y^T H\x)\geq \xi_p^{(u)})\\\leq \lim_{n\rightarrow\infty}P(\max_{\x\in\{-\frac{1}{\sqrt{n}},\frac{1}{\sqrt{n}}\}^n,\y\in\{-\frac{1}{\sqrt{m}},\frac{1}{\sqrt{m}}\}^m}
(\g^T\y+\h^T\x+\epsilon_{5}^{(g)}\sqrt{n})\geq \xi_p^{(u)})\leq 0.\label{eq:leftprobanal3}
\end{multline}

We summarize our results from this subsection in the following lemma.

\begin{lemma}
Let $H$ be an $m\times n$ matrix with i.i.d. standard normal components. Let $n$ be large and let $m=\alpha n$, where $\alpha>0$ is a constant independent of $n$. Let $\xi_p$ be as in (\ref{eq:sqrtposham1}). Let all $\epsilon$'s be arbitrarily small constants independent of $n$ and let $\xi_p^{(u)}$ be a scalar such that
\begin{equation}
(1+\epsilon_{1}^{(m)})\sqrt{\alpha}\sqrt{\frac{2}{\pi}}+(1+\epsilon_{1}^{(n)})\sqrt{\frac{2}{\pi}}+\epsilon_{5}^{(g)}<\frac{\xi_p^{(u)}}{\sqrt{n}}.\label{eq:condxipuposgenlemma}
\end{equation}
Then
\begin{eqnarray}
& & \lim_{n\rightarrow\infty}P(\max_{\x\in\{-\frac{1}{\sqrt{n}},\frac{1}{\sqrt{n}}\}^n,\y\in\{-\frac{1}{\sqrt{m}},\frac{1}{\sqrt{m}}\}^m}(\y^T H\x)\leq \xi_p^{(u)})\geq 1\nonumber \\
& \Leftrightarrow & \lim_{n\rightarrow\infty}P(\xi_p\leq \xi_p^{(u)})\geq 1 \nonumber \\
& \Leftrightarrow & \lim_{n\rightarrow\infty}P(\xi_p^2\leq (\xi_p^{(u)})^2)\geq 1, \label{eq:posgenproblemma}
\end{eqnarray}
and
\begin{equation}
\frac{E\xi_p}{\sqrt{n}}=\frac{E(\max_{\x\in\{-\frac{1}{\sqrt{n}},\frac{1}{\sqrt{n}}\}^n,\y\in\{-\frac{1}{\sqrt{m}},\frac{1}{\sqrt{m}}\}^m} \y^T H\x)}{\sqrt{n}} \leq (\sqrt{\alpha}+1)\sqrt{\frac{2}{\pi}}.\label{eq:posgenexplemma}
\end{equation}
\label{lemma:posgenlemma}
\end{lemma}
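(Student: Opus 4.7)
The plan is to treat this lemma as a synthesis step: both ingredients are essentially in place in Lemmas \ref{lemma:posexplemma} and \ref{lemma:posproblemma}, and what remains is to (a) evaluate the comparison process $\g^T\y+\h^T\x$ on the product of scaled Hamming cubes, and (b) combine the Slepian-type tail inequality with a Gaussian concentration argument to upgrade it to a statement about $\xi_p$ itself.

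For the expectation bound (\ref{eq:posgenexplemma}), I would start from Lemma \ref{lemma:posexplemma}, note that the independent standard normal $g$ on the left contributes zero in expectation, and write
\[
E\xi_p \;\leq\; E\max_{\y\in\{-1/\sqrt{m},1/\sqrt{m}\}^m}\g^T\y \;+\; E\max_{\x\in\{-1/\sqrt{n},1/\sqrt{n}\}^n}\h^T\x.
\]
Each inner maximum decouples coordinatewise, attained at $\y_i=\mathrm{sgn}(\g_i)/\sqrt{m}$ and $\x_i=\mathrm{sgn}(\h_i)/\sqrt{n}$, so the right-hand side equals $E\sum_{i=1}^{m}|\g_i|/\sqrt{m}+E\sum_{i=1}^{n}|\h_i|/\sqrt{n}$. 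Using $E|\g_i|=E|\h_i|=\sqrt{2/\pi}$ this evaluates to $(\sqrt{m}+\sqrt{n})\sqrt{2/\pi}$, and dividing by $\sqrt{n}$ with $m=\alpha n$ yields exactly $(\sqrt{\alpha}+1)\sqrt{2/\pi}$. This is precisely the chain already displayed in (\ref{eq:poshopaftlemma2})--(\ref{eq:poshopubexp1}), so nothing new is needed.

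For the probability statement (\ref{eq:posgenproblemma}), I would apply Lemma \ref{lemma:posproblemma} with the $\x$-independent choice $\zeta_{\x}\equiv \xi_p^{(u)}-\epsilon_5^{(g)}\sqrt{n}$, giving
\[
P\bigl(\xi_p+g\;\geq\;\xi_p^{(u)}-\epsilon_5^{(g)}\sqrt{n}\bigr)\;\leq\;P\Bigl(\textstyle\sum_{i=1}^{m}|\g_i|/\sqrt{m}+\sum_{i=1}^{n}|\h_i|/\sqrt{n}\;\geq\;\xi_p^{(u)}-\epsilon_5^{(g)}\sqrt{n}\Bigr).
\]
The right-hand side is a tail probability for a sum of $m+n$ i.i.d. half-normal contributions; by a weak law of large numbers (or a standard Gaussian-Lipschitz concentration bound) it sits below $(1+\epsilon_1^{(m)})\sqrt{m}\sqrt{2/\pi}+(1+\epsilon_1^{(n)})\sqrt{n}\sqrt{2/\pi}$ with probability tending to one. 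Condition (\ref{eq:condxipuposgenlemma}) is engineered so that this bound is strictly below $\xi_p^{(u)}-\epsilon_5^{(g)}\sqrt{n}$, forcing the right-hand probability to zero. The equivalence with the squared form uses $\xi_p\geq 0$ almost surely, which holds because for any admissible $(\x,\y)$ one has $\xi_p\geq\max(\y^T H\x,-\y^T H\x)=|\y^T H\x|\geq 0$.

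The only delicate point, and the one I expect to be the mild obstacle, is absorbing the extraneous $g$ on the left of the Slepian inequality, since the conclusion I want is about $\xi_p$ rather than $\xi_p+g$. I would handle this by conditioning on the event $\{|g|\leq\epsilon_5^{(g)}\sqrt{n}/2\}$, which has probability tending to one as $n\to\infty$; on this event the inclusion $\{\xi_p\geq\xi_p^{(u)}\}\subseteq\{\xi_p+g\geq\xi_p^{(u)}-\epsilon_5^{(g)}\sqrt{n}\}$ holds, allowing the vanishing Slepian right-hand probability to be transferred to $P(\xi_p\geq\xi_p^{(u)})\to 0$. This is the maneuver used in \cite{StojnicHopBnds10} and reproduces (\ref{eq:leftprobanal3}); the only care needed is to keep the $\epsilon$-budget in (\ref{eq:condxipuposgenlemma}) consistent with the slack consumed by the conditioning.
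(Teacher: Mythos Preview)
Your proposal is correct and mirrors the paper's own argument essentially step for step: the expectation bound is exactly the chain (\ref{eq:poshopaftlemma2})--(\ref{eq:poshopubexp1}) from Lemma \ref{lemma:posexplemma}, and the probability part is Lemma \ref{lemma:posproblemma} with the constant choice $\zeta_{\x}=\xi_p^{(u)}-\epsilon_5^{(g)}\sqrt{n}$ followed by the absorption of $g$ and concentration of the half-normal sums, which is precisely what the paper packages into (\ref{eq:condxipu})--(\ref{eq:leftprobanal3}) and attributes to \cite{StojnicHopBnds10}. Your handling of the extraneous $g$ and the nonnegativity of $\xi_p$ for the squared equivalence are both sound and in line with what the paper leaves implicit.
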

\begin{proof}
The proof of the first part follows from the above discussion. On the other hand, as mentioned above, the proof of the probability part follows in the same way as the corresponding one in \cite{StojnicHopBnds10}.
\end{proof}

The results that we presented in the above lemma are of course mathematically completely rigorous. However, they actually match the ones that can be obtained through the framework of the replica theory assuming the replica symmetry (see, e.g. \cite{BarGenGue11bip}). That essentially means that what we presented above actually establishes the replica symmetry prediction as a rigorous upper bound on the ground state energy of the asymmetric Little model.

To give some flavor as to what we presented so far we will fix $\alpha=1$. For such a scenario we essentially obtained a lower bound on the ground state energy that is equal to two times the ground state energy of the SK model, i.e. that is approximately equal to $2\times 0.763$. On the other hand we established an upper bound that is equal to the replica-symmetry prediction for the ground state energy, i.e. that is equal to $2\sqrt{\frac{2}{\pi}}\approx 2\times 0.7978$. We should also point out that the replica symmetry prediction for the ground state energy of the asymmetric Little model (i.e. $2\sqrt{\frac{2}{\pi}}\approx 2\times 0.7978$) is also two times the replica-symmetry prediction for the ground state energy of the SK model (which is $\sqrt{\frac{2}{\pi}}\approx \times 0.7978$). While all of this seems to be well aligned with the belief a physicist's approach to these problems would help form, it is still not clear what the real value of the ground state energy is. What we have shown indeed supports the idea that the ground state energy of the asymmetric Little model could be two times the ground state energy of the SK model but we haven't really excluded possibility that it may be higher than that as well.

In the following subsection we will provide a powerful mechanism to substantially lower the upper bound presented above. This may further substantiate belief that the original prediction about possible connection between the asymmetric Little and the SK model may in fact be correct. The mechanism that we will present will in large part rely on an important progress that we recently made while studying the Hopfield models in \cite{StojnicMoreSophHopBnds10}. The progress we made there enabled us to substantially lower the upper bound of the positive Hopfield form obtained in \cite{StojnicHopBnds10}. More importantly, it was the first example where we were able to move substantially further away from the typical replica symmetry bounds.

We should also take the opportunity to emphasize once again that the upper bounds for the positive Hopfield model presented in \cite{StojnicHopBnds10} as well as those for the asymmetric Little model presented above are essentially rigorous confirmation that the results of the replica symmetry theory are not only a good estimate but actually an estimate that is guaranteed to be from a certain side of the targeted quantity (in these cases, the replica symmetry theory estimates are upper bounds; however in some other scenarios they are often rigorous lower bounds; some examples include the negative Hopfield form discussed in \cite{StojnicHopBnds10} or the corresponding ones related to minmax version of the asymmetric Little form discussed below in Section \ref{sec:neghop} as well as various other results presented in e.g. \cite{StojnicCSetam09,StojnicRicBnds13,StojnicCSetamBlock09}).

\subsection{Lowering the upper-bound on the  ground state energy of the asymmetric Little form}
\label{sec:poshopublow}

The results that we presented in the previous subsection are a relatively easy way to establish upper-bounds on the ground state energy of the asymmetric Little model. In fact, mathematically speaking, what we did in the previous subsection is nothing too original. We essentially just recognized that certain problems of interest in theoretical physics or statistical mechanics can be easily fitted into well known mathematical frameworks. Of course, if one ignores the physics component then the random optimization problems that we studied so far are not unknown. Our arguments given above clearly rely on mathematical results of \cite{Slep62} and \cite{Gordon85}. In fact, not only do they rely on what was shown in \cite{Slep62,Gordon85}, their slight modifications were already worked out as simple examples in e.g. \cite{Gordon85}.

On the other hand the results that we will present in this subsection will turn out to be not only practically substantially better than the corresponding ones from the previous subsection but also they will contain a substantial conceptual mathematical improvement as well. As it will soon become clear, the changes that we will adopt below when compared to the previous subsection will visually appear as minimal. However, such visually minimal changes result in our opinion in a substantial breakthrough in studying various random combinatorial problems (see, e.g. \cite{StojnicHopBnds10,StojnicMoreSophHopBnds10}).

To start the presentation off we recall that in this subsection we of course again look at the optimization problem from (\ref{eq:posham1}). To have it at hand we also rewrite it once again
\begin{equation}
\xi_p=\max_{\x\in\{-\frac{1}{\sqrt{n}},\frac{1}{\sqrt{n}}\}^n,\y\in\{-\frac{1}{\sqrt{m}},\frac{1}{\sqrt{m}}\}^m}\y^T H\x.\label{eq:sqrtposham2}
\end{equation}
Since we will continue to assume that the elements of $H$ are i.i.d. standard normal random variables we will recall on another result that relates to to statistical properties of certain Gaussian processes.
\begin{theorem}(\cite{Gordon85})
\label{thm:Gordonpos1} Let $X_{i}$ and $Y_{i}$, $1\leq i\leq n$, be two centered Gaussian processes which satisfy the following inequalities for all choices of indices
\begin{enumerate}
\item $E(X_{i}^2)=E(Y_{i}^2)$
\item $E(X_{i}X_{l})\leq E(Y_{i}Y_{l}), i\neq l$.
\end{enumerate}
Let $\psi()$ be an increasing function on the real axis. Then
\begin{equation*}
E(\min_{i}\psi(X_{i}))\leq E(\min_i \psi(Y_{i})) \Leftrightarrow E(\max_{i}\psi(X_{i}))\geq E(\max_i\psi(Y_{i})).
\end{equation*}
\end{theorem}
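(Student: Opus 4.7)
The plan is to prove Theorem \ref{thm:Gordonpos1} via a Gaussian interpolation (Slepian-style smoothing) argument, which is the natural extension of the proof of Theorem \ref{thm:Slepian2} to accommodate the monotone transformation $\psi$. First, I would observe that the $\min$ and $\max$ versions are equivalent via the identity $\min_i \psi(X_i) = -\max_i\tilde\psi(-X_i)$ with $\tilde\psi(u)=-\psi(-u)$ (still increasing) and that $(-X_i)$, $(-Y_i)$ satisfy the same covariance hypotheses as $(X_i)$, $(Y_i)$. So it suffices to prove $E[\max_i \psi(X_i)]\geq E[\max_i \psi(Y_i)]$.

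Without loss of generality I would couple $X=(X_i)$ and $Y=(Y_i)$ on a common probability space so that they are independent (the hypotheses only constrain covariances within each process, so independent copies are legitimate). For $t\in[0,1]$ define the interpolation
\begin{equation*}
W_i(t)=\sqrt{1-t}\,X_i+\sqrt{t}\,Y_i, \qquad i=1,\ldots,n,
\end{equation*}
so that $W_i(0)=X_i$, $W_i(1)=Y_i$ and $E[W_i(t)^2]=E[X_i^2]=E[Y_i^2]$ for every $t$. A short computation gives $\mathrm{Cov}(\tfrac{d}{dt}W_i(t),W_j(t))=\tfrac12(E[Y_iY_j]-E[X_iX_j])$, which by hypothesis is $\geq 0$ for $i\neq l$ and exactly $0$ for $i=l$. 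To handle the non-smoothness of $\max$ and allow integration by parts, I would replace the maximum by the soft-max $\phi_\beta(\z)=\beta^{-1}\log\sum_i e^{\beta z_i}$ (with $\phi_\beta(\z)\to\max_i z_i$ as $\beta\to\infty$) and set $G_\beta(\z):=\phi_\beta(\psi(z_1),\ldots,\psi(z_n))$. If $\psi$ is not a priori smooth, I would first approximate it by $C^1$ increasing $\psi_\epsilon$ (e.g.\ via mollification), prove the inequality for $G_{\beta,\epsilon}$, and pass $\epsilon\to 0$.

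The key step is then to show
\begin{equation*}
\frac{d}{dt}E\bigl[G_\beta(W(t))\bigr]\leq 0,
\end{equation*}
so that $E[G_\beta(Y)]\leq E[G_\beta(X)]$; the conclusion follows by letting $\beta\to\infty$ (with dominated convergence justified by Gaussian moment bounds on $\max_i|W_i(t)|$). Differentiating under the expectation and using the Gaussian integration-by-parts identity $E[Uf(V)]=\sum_j\mathrm{Cov}(U,V_j)E[\partial_j f(V)]$ with $U=\tfrac{d}{dt}W_i(t)$ and $V=W(t)$,
\begin{equation*}
\frac{d}{dt}E[G_\beta(W(t))]=\sum_{i\neq l}\tfrac12\bigl(E[Y_iY_l]-E[X_iX_l]\bigr)\,E\bigl[\partial_i\partial_l G_\beta(W(t))\bigr].
\end{equation*}
A direct computation using $\partial_i\phi_\beta(\u)=p_i$ and $\partial_l p_i=-\beta p_ip_l$ (for $i\neq l$), where $p_i=e^{\beta u_i}/\sum_k e^{\beta u_k}$, gives $\partial_i\partial_l G_\beta(\z)=-\beta\,p_ip_l\,\psi'(z_i)\psi'(z_l)\leq 0$ for $i\neq l$, since $\psi'\geq 0$. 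Combined with $E[Y_iY_l]-E[X_iX_l]\geq 0$ for $i\neq l$, this gives the required sign.

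The main obstacle I anticipate is technical rather than conceptual: ensuring the integration by parts and the dominated-convergence passages ($\epsilon\to 0$ for the $\psi$-smoothing and $\beta\to\infty$ for the soft-max) are uniformly controlled. Polynomial growth of $G_{\beta,\epsilon}$ and its derivatives, together with Gaussian tail bounds on $\max_i |W_i(t)|$ uniform in $t\in[0,1]$, should suffice; if $\psi$ itself grows too fast to be directly integrable, I would prove the result first for bounded increasing $\psi$ (by truncation $\psi_M:=(\psi\wedge M)\vee(-M)$) and then extend by monotone convergence using the integrability of $E[\max_i\psi(X_i)^+]$ and $E[\max_i\psi(Y_i)^+]$.
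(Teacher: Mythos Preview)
The paper does not supply its own proof of this theorem; it is quoted verbatim from \cite{Gordon85} and used as a black box, so there is nothing in the paper to compare against. Your Gaussian-interpolation argument with soft-max smoothing is precisely the classical route (Slepian--Kahane--Fernique--Gordon) by which such comparison inequalities are established, and the sign computation for the off-diagonal mixed partials $\partial_i\partial_l G_\beta=-\beta p_ip_l\psi'(z_i)\psi'(z_l)\leq 0$ is correct. The reduction of the $\min$ statement to the $\max$ statement via $\tilde\psi(u)=-\psi(-u)$ is also fine; note only that the ``$\Leftrightarrow$'' in the statement is really asserting that both inequalities hold under the same hypotheses, not that one implies the other, so proving one and invoking the symmetry suffices. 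The technical caveats you raise (mollification of $\psi$, truncation for integrability, dominated convergence as $\beta\to\infty$) are genuine but routine for finite Gaussian vectors and are handled the same way in the original references.
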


In the previous subsection we relied on the above theorem to create an upper-bound on the ground state energy of the asymmetric Little model. However, the strategy employed in the previous subsection utilized only a basic version of the above theorem where $\psi(x)=x$. Here we will substantially upgrade the strategy by looking at a very simple (but way better) different version of $\psi()$. Along the same lines, we should mention that the strategy that we will present below will relate to what we presented in the previous subsection in the same way the results related to the positive Hopfield model obtained in \cite{StojnicMoreSophHopBnds10} relate to the corresponding ones from \cite{StojnicHopBnds10}.

We do recall without going into details that the ground state energies will concentrate in the thermodynamic limit. Hence in this subsection we will mostly focus on the expected value of $\xi_p$ (one can then easily adapt our results to describe more general probabilistic concentrating properties of ground state energies similarly to what was done in the previous subsection). The following is then a direct application of Theorem \ref{thm:Gordonpos1}.
\begin{lemma}
Let $H$ be an $m\times n$ matrix with i.i.d. standard normal components. Let $\g$ and $\h$ be $m\times 1$ and $n\times 1$ vectors, respectively, with i.i.d. standard normal components. Also, let $g$ be a standard normal random variable and let $c_3$ be a positive constant. Then
\begin{equation}
E(\max_{\x\in\{-\frac{1}{\sqrt{n}},\frac{1}{\sqrt{n}}\}^n,\|\y\|_2=1}e^{c_3(\y^T H\x + g)})\leq E(\max_{\x\in\{-\frac{1}{\sqrt{n}},\frac{1}{\sqrt{n}}\}^n,\|\y\|_2=1}e^{c_3(\g^T\y+\h^T\x)}).\label{eq:posexplemma}
\end{equation}\label{lemma:posexplemmalow}
\end{lemma}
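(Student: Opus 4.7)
My plan is to recognize (\ref{eq:posexplemma}) as a direct application of Gordon's comparison (Theorem \ref{thm:Gordonpos1}) with the increasing function $\psi(t)=e^{c_3 t}$, applied to the same pair of Gaussian fields used in the linear comparison of Lemma \ref{lemma:posexplemma}. Note that compared to that earlier lemma the feasible set for $\y$ has been relaxed from the hypercube $\{-1/\sqrt m,1/\sqrt m\}^m$ to the whole unit sphere of $\mathbb{R}^m$, a relaxation that costs nothing in the comparison step but will later let the $\y$-part of the outer maximum be evaluated explicitly and then optimized in $c_3$; this is precisely the mechanism that made the analogous upgrade possible for the Hopfield model in \cite{StojnicMoreSophHopBnds10}.

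Concretely, I would introduce the two centered Gaussian processes
\[
X_{\x,\y}=\y^T H\x+g,\qquad Y_{\x,\y}=\g^T\y+\h^T\x,
\]
indexed by $(\x,\y)$ with $\x\in\{-1/\sqrt n,1/\sqrt n\}^n$ and $\|\y\|_2=1$, where $H$, $\g$, $\h$, $g$ are mutually independent standard Gaussians of the indicated dimensions. Since $\|\y\|_2^2=1$ and $\|\x\|_2^2=n\cdot\frac{1}{n}=1$, both processes have common variance $2$ at every index, so Gordon's first hypothesis is met. For distinct pairs $(\x,\y)\neq(\x',\y')$ a one-line calculation gives
\[
E(X_{\x,\y}X_{\x',\y'})-E(Y_{\x,\y}Y_{\x',\y'})=(\y^T\y')(\x^T\x')+1-\y^T\y'-\x^T\x'=(1-\y^T\y')(1-\x^T\x'),
\]
and both factors are nonnegative: $1-\y^T\y'\geq 0$ by Cauchy-Schwarz on the unit sphere, and $1-\x^T\x'\geq 0$ since $\x,\x'\in\{-1/\sqrt n,1/\sqrt n\}^n$ satisfy $\x^T\x'\leq\|\x\|_2\|\x'\|_2=1$. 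Hence $X$ has pairwise covariances everywhere above those of $Y$, which is exactly the hypothesis of Theorem \ref{thm:Gordonpos1} with the roles of the two processes swapped. Applying that theorem with $\psi(t)=e^{c_3 t}$ (increasing because $c_3>0$) then yields $E(\max\psi(X_{\x,\y}))\leq E(\max\psi(Y_{\x,\y}))$, which is precisely (\ref{eq:posexplemma}).

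The one technical wrinkle I expect to need some care is that Theorem \ref{thm:Gordonpos1} is stated over a finite index set, while here $\y$ ranges over the continuous sphere. The standard remedy is to apply the theorem first on $\{-1/\sqrt n,1/\sqrt n\}^n\times\mathcal{N}_\epsilon$, with $\mathcal{N}_\epsilon$ a finite $\epsilon$-net of the sphere (the covariance inequality trivially passes to any subindex set), and then let $\epsilon\downarrow 0$. Continuity of $\y\mapsto\y^T H\x+g$ and $\y\mapsto\g^T\y+\h^T\x$ on the compact sphere, combined with dominated convergence against the integrable envelopes $\exp(c_3\|H\x\|_2+c_3|g|)$ and $\exp(c_3\|\g\|_2+c_3|\h^T\x|)$ (each a finite Gaussian moment generating function), pushes the finite-net inequality through to the continuous statement. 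This limiting step is the only nonalgebraic part of the argument; all other ingredients are direct computations.
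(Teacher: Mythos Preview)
Your proposal is correct and follows exactly the route the paper takes: it is, as the paper says, ``a standard/direct application of Theorem \ref{thm:Gordonpos1}'' with $\psi(t)=e^{c_3 t}$, and your covariance identity $(1-\y^T\y')(1-\x^T\x')\geq 0$ together with the equal-variance check is precisely the verification that the paper delegates to \cite{StojnicMoreSophHopBnds10}. You have simply made explicit the computation (and the $\epsilon$-net passage to the continuous sphere) that the paper omits.
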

\begin{proof}
As mentioned above, the proof is a standard/direct application of Theorem \ref{thm:Gordonpos1}. For a slightly different set of $\y$'s it was presented in \cite{StojnicMoreSophHopBnds10}. The structure of set of allowed $\y$'s changes nothing and the proof from \cite{StojnicMoreSophHopBnds10} can be translated step by step here. We skip such an easy exercise.
\end{proof}

To make the previous lemma operational we follow/adapt the strategy of \cite{StojnicMoreSophHopBnds10}. After repeating step by step what was done in \cite{StojnicMoreSophHopBnds10} one arrives at
\begin{equation}
\hspace{-.5in}E(\max_{\x\in\{-\frac{1}{\sqrt{n}},\frac{1}{\sqrt{n}}\}^n,\y\in\{-\frac{1}{\sqrt{m}},\frac{1}{\sqrt{m}}\}^m}\y^T H\x)\leq
-\frac{c_3}{2}+\frac{1}{c_3}\log(E(\max_{\x\in\{-\frac{1}{\sqrt{n}},\frac{1}{\sqrt{n}}\}^n}(e^{c_3\h^T\x})))
+\frac{1}{c_3}\log(E(\max_{\y\in\{-\frac{1}{\sqrt{m}},\frac{1}{\sqrt{m}}\}^m}(e^{c_3\g^T\y}))).\label{eq:chpos8}
\end{equation}
Let $c_3=c_3^{(s)}\sqrt{n}$ where $c_3^{(s)}$ is a constant independent of $n$. Then (\ref{eq:chpos8}) becomes
\begin{eqnarray}
\frac{E(\max_{\x\in\{-\frac{1}{\sqrt{n}},\frac{1}{\sqrt{n}}\}^n,\y\in\{-\frac{1}{\sqrt{m}},\frac{1}{\sqrt{m}}\}^m}\y^T H\x)}{\sqrt{n}}
& \leq &
-\frac{c_3^{(s)}}{2}+\frac{1}{nc_3^{(s)}}\log(E(\max_{\x\in\{-1,1\}^n}(e^{c_3^{(s)}\h^T\x})))\nonumber \\
& + &\frac{1}{nc_3^{(s)}}\log(E(\max_{\y\in\{-1,1\}^m}(e^{c_3^{(s)}\sqrt{\frac{n}{m}}\g^T\y}))).\label{eq:chpos91}
\end{eqnarray}
Transforming further we obtain
\begin{multline}
-\frac{c_3^{(s)}}{2}+\frac{1}{nc_3^{(s)}}\log(E(\max_{\x\in\{-1,1\}^n}(e^{c_3^{(s)}\h^T\x})))
+ \frac{1}{nc_3^{(s)}}\log(E(\max_{\y\in\{-1,1\}^m}(e^{c_3^{(s)}\sqrt{\frac{n}{m}}\g^T\y})))\\
=
-\frac{c_3^{(s)}}{2}+\frac{1}{c_3^{(s)}}\log(E(e^{c_3^{(s)}|\h_1|}))+\frac{\alpha}{c_3^{(s)}}\log(E(e^{c_3^{(s)}\sqrt{\frac{1}{\alpha}}|\g_1|}))
\\
 =
-\frac{c_3^{(s)}}{2}+\frac{c_3^{(s)}}{2}+\frac{1}{c_3^{(s)}}\log(\mbox{erfc}(-\frac{c_3^{(s)}}{\sqrt{2}}))
+\frac{c_3^{(s)}}{2}+\frac{\alpha}{c_3^{(s)}}\log(\mbox{erfc}(-\frac{c_3^{(s)}}{\sqrt{2\alpha}}))
 \\
 = \frac{c_3^{(s)}}{2}+\frac{1}{c_3^{(s)}}\log(\mbox{erfc}(-\frac{c_3^{(s)}}{\sqrt{2}}))
+\frac{\alpha}{c_3^{(s)}}\log(\mbox{erfc}(-\frac{c_3^{(s)}}{\sqrt{2\alpha}})).\label{eq:chpos92}
\end{multline}
Connecting (\ref{eq:chpos91}) and (\ref{eq:chpos92}) we finally have
\begin{equation}
\frac{E(\max_{\x\in\{-\frac{1}{\sqrt{n}},\frac{1}{\sqrt{n}}\}^n,\y\in\{-\frac{1}{\sqrt{m}},\frac{1}{\sqrt{m}}\}^m}\y^T H\x)}{\sqrt{n}}
\leq \frac{c_3^{(s)}}{2}+\frac{1}{c_3^{(s)}}\log(\mbox{erfc}(-\frac{c_3^{(s)}}{\sqrt{2}}))
+\frac{\alpha}{c_3^{(s)}}\log(\mbox{erfc}(-\frac{c_3^{(s)}}{\sqrt{2\alpha}})).\label{eq:chpos9}
\end{equation}
One should now note that the above bound is effectively correct for any positive constant $c_3^{(s)}$.  Of course to make it as tight as possible one then has
\begin{equation}
\hspace{-.1in}\frac{E(\max_{\x\in\{-\frac{1}{\sqrt{n}},\frac{1}{\sqrt{n}}\}^n,\y\in\{-\frac{1}{\sqrt{m}},\frac{1}{\sqrt{m}}\}^m}\y^T H\x)}{\sqrt{n}}
\leq \min_{c_3^{(s)}\geq 0}\left ( \frac{c_3^{(s)}}{2}+\frac{1}{c_3^{(s)}}\log(\mbox{erfc}(-\frac{c_3^{(s)}}{\sqrt{2}}))
+\frac{\alpha}{c_3^{(s)}}\log(\mbox{erfc}(-\frac{c_3^{(s)}}{\sqrt{2\alpha}}))\right ).\label{eq:ubmorsoph1}
\end{equation}

We summarize our results from this subsection in the following lemma.

\begin{lemma}
Let $H$ be an $m\times n$ matrix with i.i.d. standard normal components. Let $n$ be large and let $m=\alpha n$, where $\alpha>0$ is a constant independent of $n$. Let $\xi_p$ be as in (\ref{eq:sqrtposham1}). Let $\xi_p^{(u,low)}$ be a scalar such that
\begin{equation}
\xi_p^{(u,low)}=\min_{c_3^{(s)}\geq 0}\left ( \frac{c_3^{(s)}}{2}+\frac{1}{c_3^{(s)}}\log(\mbox{erfc}(-\frac{c_3^{(s)}}{\sqrt{2}}))
+\frac{\alpha}{c_3^{(s)}}\log(\mbox{erfc}(-\frac{c_3^{(s)}}{\sqrt{2\alpha}}))\right ).\label{eq:condxipuposgenlemma}
\end{equation}
Then
\begin{equation}
\frac{E\xi_p}{\sqrt{n}}\leq\xi_p^{(u,low)}.\label{eq:posgenexplemma}
\end{equation}
Moreover,
\begin{eqnarray}
& & \lim_{n\rightarrow\infty}P(\max_{\x\in\{-\frac{1}{\sqrt{n}},\frac{1}{\sqrt{n}}\}^n,\y\in\{-\frac{1}{\sqrt{m}},\frac{1}{\sqrt{m}}\}^m}\y^T H\x\leq \xi_p^{(u,low)})\geq 1\nonumber \\
& \Leftrightarrow & \lim_{n\rightarrow\infty}P(\xi_p\leq \xi_p^{(u,low)})\geq 1 \nonumber \\
& \Leftrightarrow & \lim_{n\rightarrow\infty}P(\xi_p^2\leq (\xi_p^{(u,low)})^2)\geq 1. \label{eq:posgenproblemma}
\end{eqnarray}
In particular, when $\alpha=1$
\begin{equation}
\frac{E\xi_p}{\sqrt{n}}\leq\xi_p^{(u,low)}=2\times 0.7688.\label{eq:posgenexplemma1}
\end{equation}
\label{lemma:posgenlemmalow}
\end{lemma}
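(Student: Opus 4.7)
The plan is to follow the recipe laid out in the discussion preceding the statement: apply Gordon's comparison theorem (Theorem \ref{thm:Gordonpos1}) with the increasing function $\psi(x)=e^{c_3 x}$ in place of the identity used in Subsection \ref{sec:poshopub}, then evaluate the resulting bound in closed form and optimize over $c_3$. The starting point is Lemma \ref{lemma:posexplemmalow}, specialized to the discrete sets $\x\in\{-1/\sqrt n,1/\sqrt n\}^n$ and $\y\in\{-1/\sqrt m,1/\sqrt m\}^m$. The hypotheses of Theorem \ref{thm:Gordonpos1} pass through without change, since both $\x$ and $\y$ lie on their respective unit spheres: the variances of $X_{\x,\y}=\y^T H\x + g$ and $Y_{\x,\y}=\g^T\y + \h^T\x$ are both equal to $2$, and the covariance comparison reduces to $(1-\x^T\x')(1-\y^T\y')\geq 0$, which is automatic since both inner products are bounded by $1$ in absolute value.

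Given the resulting inequality $E(\max e^{c_3(\y^T H\x + g)})\leq E(\max e^{c_3(\g^T\y+\h^T\x)})$, I would exploit the independence of $g$, $\g$, $\h$, $H$ to peel off $E(e^{c_3 g})=e^{c_3^2/2}$ on the left and factor the right as $E(\max_\y e^{c_3\g^T\y})\cdot E(\max_\x e^{c_3\h^T\x})$. A single application of Jensen's inequality, $E(\max\y^T H\x)\leq c_3^{-1}\log E(\max e^{c_3\y^T H\x})$, converts the multiplicative bound into the additive bound (\ref{eq:chpos8}). The inner maxima simplify immediately: $\max_\x e^{c_3\h^T\x} = \exp(c_3\sum_i|\h_i|/\sqrt n)$, so independence factorizes its expectation into $(E\,e^{c_3|h_1|/\sqrt n})^n$, and the elementary identity $E(e^{t|g|})=e^{t^2/2}\mbox{erfc}(-t/\sqrt 2)$ supplies a closed form. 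The $\y$-side calculation is parallel, with the factor $\alpha$ entering through $m=\alpha n$. Setting $c_3 = c_3^{(s)}\sqrt n$ renders the bound $n$-independent, and algebraic simplification produces (\ref{eq:chpos9}).

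Since $c_3^{(s)}>0$ is a free parameter, taking the infimum yields the advertised $\xi_p^{(u,low)}$. The high-probability statement then follows from standard Gaussian concentration of measure: the map $H\mapsto\xi_p$ is $1$-Lipschitz in the Frobenius norm of $H$, so $\xi_p/\sqrt n$ concentrates around its mean on a scale $o(1)$, and the upper bound on the expectation lifts to a high-probability upper bound on $\xi_p$ (and on $\xi_p^2$) in the same way as in the previous subsection. The value $2\times 0.7688$ at $\alpha=1$ is obtained by numerically minimizing the explicit one-dimensional function of $c_3^{(s)}$ appearing in (\ref{eq:condxipuposgenlemma}).

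The main obstacle is not technical but conceptual: the point is to recognize that replacing $\psi(x)=x$ by $\psi(x)=e^{c_3 x}$ in Gordon's theorem is exactly what decouples the suprema over $\x$ and $\y$ multiplicatively on the right-hand side and thereby trades the crude $E\|\cdot\|_1\leq\sqrt{2/\pi}$ estimate of Subsection \ref{sec:poshopub} for the substantially tighter $\mbox{erfc}$ expression. Once that substitution is in place the remaining steps are routine, and the only genuinely computational task is checking numerically at $\alpha=1$ that the optimal $c_3^{(s)}$ gives a value strictly below the replica-symmetric bound $(\sqrt\alpha+1)\sqrt{2/\pi}$.
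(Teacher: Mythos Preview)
Your proposal is correct and follows essentially the same route as the paper. The paper's proof consists precisely of the discussion culminating in (\ref{eq:chpos8})--(\ref{eq:ubmorsoph1}), deferring the intermediate manipulations (Jensen, factorization via independence of $g,\g,\h$, and the identity $E(e^{t|g|})=e^{t^2/2}\mbox{erfc}(-t/\sqrt 2)$) to \cite{StojnicMoreSophHopBnds10}; you have simply made those steps explicit, and your covariance check $(1-\x^T\x')(1-\y^T\y')\geq 0$ is exactly what underlies Lemma \ref{lemma:posexplemmalow} when restricted to the discrete index sets.
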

\begin{proof}
The first part of the proof related to the expected values follows from the above discussion. The probability part follows by the concentration arguments from the previous subsections (also, see, e.g. discussion in \cite{StojnicHopBnds10,StojnicMoreSophHopBnds10}).
\end{proof}

One way to see how the above lemma works in practice is (as specified in the lemma) to choose $\alpha=1$ to obtain $\xi_p^{(u,low)}=2\times 0.7688$. This value is substantially better than $2\times 0.7978$ offered in the previous subsection. Also, we should finally clarify a bit what we mean by a substantial improvement. If one just merely looks at numbers, improvement from $0.7978$ to $0.7688$ may not seem as a whole lot. However, there are two things to keep in mind. First, according to the lower bounds presented in Subsection \ref{sec:poshoplb} the value we are discussing in any event can not go below $0.763$. So in terms of closing the gap from $0.763-0.7978$ to $0.763-0.7688$ the improvement may actually seem rather substantial. Second, while all the numbers we just mentioned seem close to each other, an extensive experience in studying random combinatorial problems usually suggests that any improvement over the standard replica-symmetry type of results is typically a big breakthrough. Here, not only that the strategy presented above manages to go below the replica-symmetry predictions (actually a rigorous upper bounds as shown in the previous subsection) but it almost approaches the lower bounding value. Moreover, all of that is achieved with a seemingly fairly simple yet super powerful approach. While there are other ways one can utilize to provide upper bounds on the ground state energy of the asymmetric Little model, we should mention that not that many of them we found capable of providing the values below the replica-symmetry predictions (upper bounds). Moreover, it is only the techniques that we presented in this and the previous two subsections that we observed as capable of making a substantial improvement over the replica-symmetry predictions (i.e. capable of reaching into a $.76...$ zone).

\section{Minmax asymmetric Little form}
\label{sec:neghop}

In this section we will look at the following optimization problem
\begin{equation}
\min_{\x\in\{-\frac{1}{\sqrt{n}},\frac{1}{\sqrt{n}}\}^n}\max_{\y\in\{-\frac{1}{\sqrt{m}},\frac{1}{\sqrt{m}}\}^m}\y^T H\x.\label{eq:negham1}
\end{equation}
Clearly, this is a minmax version of the problem given in (\ref{eq:posham1}) that we studied in the previous section. This problem may not have as much of importance within the theoretical physics community as its max counterpart (\ref{eq:posham1}) does have. However, when viewed in a purely optimization/algorithmic way it is a more general variant of a well known combinatorial problem called number partitioning problem (NPP). In fact, for $m=1$ in (\ref{eq:negham1}) one has exactly the random real number partitioning problem. For an integer $m>1$ one then has a multiple number partitioning problem where overall partitioning deviation is estimated through the $\ell_1$ norm (of course instead of the $\ell_1$ one can look at deviations with respect to say the $\ell_2$ norm; in such scenarios one then for $m>1$ has a variant of the negative Hopfield model; of course when $m=1$ the $\ell_1$ and $\ell_2$ deviations would be the same and one has that a single number partitioning problem can be casted as both, a variant of the negative Hopfield model and as a variant of minmax of the asymmetric Little model). We should also recall that in Section \ref{sec:back} we mentioned that throughout the paper we would assume that $m$ and $n$ are large and proportional to each other (although the proportionality constant, namely $\alpha$, can be assumed to be arbitrarily small). Still, $m=1$ would not fit into such a scenario. However, as we also mentioned in Section \ref{sec:back}, many of the results that we will present will be applicable even for fixed (finite) values of $m$ and $n$. Along the same line, the concepts that we will present below will remain useful even when $m=1$. However, some of the calculations that we will present below will have to be done a bit more carefully in that case and we will present such considerations in a separate paper that relates exclusively to the NPP's.

For a deterministic (given fixed) $H$ the above problem (\ref{eq:negham1}) is of course known to be NP-hard (as (\ref{eq:posham1}), it essentially falls into the class of binary bilinear optimization problems). Instead of looking at the problem in (\ref{eq:negham1}) in a deterministic way i.e. in a way that assumes that matrix $H$ is deterministic, we will adopt the strategy of the previous section and look at it in a statistical scenario. Also as in the previous section, we will assume that the elements of matrix $H$ are i.i.d. standard normals. We will then call the form (\ref{eq:negham1}) with Gaussian $H$, the Gaussian minmax asymmetric Little form. On the other hand, if needed, we will call the form (\ref{eq:negham1}) with Bernoulli $H$ (the case probably more of interest in neural networks/statistical physics), the Bernoulli minmax asymmetric Little form. In the remainder of this section we will look at possible ways to estimate the optimal value of the optimization problem in (\ref{eq:negham1}). In fact we will introduce a strategy similar to the one presented in the previous section to create a lower-bound on the optimal value of (\ref{eq:negham1}).

\subsection{Lower-bounding ground state energy of the minmax asymmetric Little form}
\label{sec:neghoplb}

As mentioned above, the subject of consideration in this section is the problem from (\ref{eq:negham1}). As earlier we will call its optimal objective value $\xi_n$, i.e.
\begin{equation}
\xi_n=\min_{\x\in\{-\frac{1}{\sqrt{n}},\frac{1}{\sqrt{n}}\}^n}\max_{\y\in\{-\frac{1}{\sqrt{m}},\frac{1}{\sqrt{m}}\}^m}\y^T H\x.\label{eq:sqrtnegham1}
\end{equation}
As hinted above, we will be interested in typical behavior of $\xi_n$ when the above problem is of statistical nature. Also, as mentioned above, for the statistical concreteness we will assume that the elements of $H$ are i.i.d. standard normal random variables. Of course, as earlier, as far as the validity of the the results that we will present below is concerned, there is really not that much of a restriction on the assumed statistics. On the other hand we believe that there is a substantial difference in the quality/simplicity of the presentation and for that reason we choose to work in the standard normal setup (of course, we do believe that the standard normal setup offers the opportunities for the most elegant presentations). To utilize such a setup we start by recalling on the following set of results from \cite{Gordon88} (they relate to statistical properties of certain Gaussian processes which will obviously be useful in dealing with an $H$ of that type).
\begin{theorem}(\cite{Gordon88})
\label{thm:Gordonmesh1} Let $X_{ij}$ and $Y_{ij}$, $1\leq i\leq n,1\leq j\leq m$, be two centered Gaussian processes which satisfy the following inequalities for all choices of indices
\begin{enumerate}
\item $E(X_{ij}^2)=E(Y_{ij}^2)$
\item $E(X_{ij}X_{ik})\geq E(Y_{ij}Y_{ik})$
\item $E(X_{ij}X_{lk})\leq E(Y_{ij}Y_{lk}), i\neq l$.
\end{enumerate}
Then
\begin{equation*}
P(\bigcap_{i}\bigcup_{j}(X_{ij}\geq \lambda_{ij}))\leq P(\bigcap_{i}\bigcup_{j}(Y_{ij}\geq \lambda_{ij})).
\end{equation*}
\end{theorem}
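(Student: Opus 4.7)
The plan is to prove Theorem \ref{thm:Gordonmesh1} by the classical Gaussian interpolation/smoothing argument that Slepian's lemma (Theorem \ref{thm:Slepian1}) uses, but adapted to handle the two-index $(i,j)$ structure and the three covariance conditions. Without loss of generality I will take $X$ and $Y$ to be independent (otherwise replace $Y$ by an independent copy, which does not affect any of the three covariance conditions), and define the interpolated Gaussian process
\begin{equation*}
Z_{ij}(t) = \sqrt{t}\, X_{ij} + \sqrt{1-t}\, Y_{ij}, \qquad t \in [0,1],
\end{equation*}
so that $Z_{ij}(0)=Y_{ij}$ and $Z_{ij}(1)=X_{ij}$. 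Let $F_\varepsilon:\mathbb{R}^{n\times m}\to[0,1]$ be a smooth approximation, depending on a small parameter $\varepsilon>0$, of the indicator of the event $\bigcap_i \bigcup_j \{z_{ij}\geq \lambda_{ij}\}$. A convenient choice is
\begin{equation*}
F_\varepsilon(z) \;=\; \prod_{i=1}^{n} g_i(z_{i,\cdot}), \qquad g_i(z_{i,\cdot}) \;=\; \frac{1}{\beta}\log\!\Bigl(1 + \sum_{j=1}^{m} e^{\beta\, h_\varepsilon(z_{ij}-\lambda_{ij})}\Bigr),
\end{equation*}
where $h_\varepsilon$ is a smooth non-decreasing mollification of $\mathbf{1}_{[0,\infty)}$, $\beta$ is a large soft-max parameter, and the scale is normalized so that $g_i\in[0,1]$. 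As $\varepsilon\to 0$ and $\beta\to\infty$ this $F_\varepsilon$ converges pointwise to the indicator of the event of interest.

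Next I would define $\phi(t) = E\bigl[F_\varepsilon(Z(t))\bigr]$ and compute $\phi'(t)$ via the standard Gaussian integration-by-parts (Stein) identity, which yields
\begin{equation*}
\phi'(t) \;=\; \frac{1}{2}\sum_{(i,j),(l,k)} \bigl[E(X_{ij}X_{lk})-E(Y_{ij}Y_{lk})\bigr]\, E\!\left[\frac{\partial^2 F_\varepsilon}{\partial z_{ij}\,\partial z_{lk}}\bigl(Z(t)\bigr)\right].
\end{equation*}
The terms split into three groups matching the three hypotheses of the theorem. Diagonal terms $(i,j)=(l,k)$ vanish by condition~1. For within-row terms ($i=l$, $j\neq k$) the covariance difference is $\geq 0$ by condition~2, and a direct calculation for the soft-max $g_i$ (analogous to $\partial^2_{xy}\log(e^x+e^y)\leq 0$) shows that the mixed partial $\partial^2 g_i/\partial z_{ij}\,\partial z_{ik}\leq 0$; since the other factors $g_{i'}$ are non-negative, the full mixed partial $\partial^2 F_\varepsilon/\partial z_{ij}\,\partial z_{ik}$ is $\leq 0$. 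For cross-row terms ($i\neq l$) the covariance difference is $\leq 0$ by condition~3, while $\partial^2 F_\varepsilon/\partial z_{ij}\,\partial z_{lk} = (\partial g_i/\partial z_{ij})(\partial g_l/\partial z_{lk})\prod_{i'\neq i,l} g_{i'}\geq 0$ because each $g_i$ is non-decreasing and non-negative. In both non-diagonal cases the two factors have opposite signs, so every contribution to $\phi'(t)$ is $\leq 0$.

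Hence $\phi$ is non-increasing on $[0,1]$, so $\phi(1)\leq\phi(0)$, i.e.\
\begin{equation*}
E\bigl[F_\varepsilon(X)\bigr] \;\leq\; E\bigl[F_\varepsilon(Y)\bigr].
\end{equation*}
Sending $\varepsilon\to 0$ and $\beta\to\infty$ and invoking bounded convergence recovers the indicator and gives
\begin{equation*}
P\!\left(\bigcap_i \bigcup_j \{X_{ij}\geq \lambda_{ij}\}\right) \;\leq\; P\!\left(\bigcap_i \bigcup_j \{Y_{ij}\geq \lambda_{ij}\}\right),
\end{equation*}
which is the desired conclusion. The main technical obstacle is the sign bookkeeping for the second partials of $F_\varepsilon$: verifying that the soft-max really does yield non-positive within-row mixed partials uniformly in $\beta$, and that the smoothing/limit can be exchanged with the expectation. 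Both points are handled by standard dominated-convergence estimates once $F_\varepsilon$ is chosen with bounded derivatives on every compact set and controlled tail growth, which is why the soft-max/mollifier construction above is convenient.
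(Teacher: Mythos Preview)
The paper does not prove Theorem \ref{thm:Gordonmesh1}; it is simply quoted from Gordon (1988) as a known tool, so there is no in-paper argument to compare against. Your interpolation proof is precisely the Kahane--Gordon method and the strategy is sound: interpolate $Z(t)=\sqrt{t}\,X+\sqrt{1-t}\,Y$, apply Gaussian integration by parts to get
\[
\phi'(t)=\tfrac{1}{2}\sum_{(i,j),(l,k)}\bigl(E X_{ij}X_{lk}-E Y_{ij}Y_{lk}\bigr)\,E\bigl[\partial^{2}_{z_{ij}z_{lk}}F_\varepsilon(Z(t))\bigr],
\]
and then match each covariance difference with a second partial of the opposite sign. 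Your sign bookkeeping is correct: with $g_i=\frac{1}{\beta}\log\bigl(1+\sum_j e^{\beta h_\varepsilon}\bigr)$ one indeed has $g_i\geq 0$, $\partial_{z_{ij}}g_i\geq 0$, and $\partial^{2}_{z_{ij}z_{ik}}g_i\leq 0$ for $j\neq k$, which is all that is needed.

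Two minor points. First, your $g_i$ does not literally land in $[0,1]$ for finite $\beta$ (its maximum is $\frac{1}{\beta}\log(1+me^{\beta})>1$); the normalization you mention should be made explicit, though dividing by a positive constant leaves all sign claims intact. Second, Gordon's own smoothing (and the one in most textbook treatments) is
\[
g_i(z_{i,\cdot})=1-\prod_{j=1}^{m}\bigl(1-\Phi_\varepsilon(z_{ij}-\lambda_{ij})\bigr)
\]
for a smooth distribution function $\Phi_\varepsilon$ approximating $\mathbf 1_{[0,\infty)}$; this sits in $[0,1]$ automatically, the same three sign properties fall out by a one-line computation, and the limiting/dominated-convergence step is cleaner because $F_\varepsilon$ is already bounded by $1$. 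Either construction gives the result.
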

The following, more simpler, version of the above theorem relates to the expected values.
\begin{theorem}(\cite{Gordon88})
\label{thm:Gordonmesh2} Let $X_{ij}$ and $Y_{ij}$, $1\leq i\leq n,1\leq j\leq m$, be two centered Gaussian processes which satisfy the following inequalities for all choices of indices
\begin{enumerate}
\item $E(X_{ij}^2)=E(Y_{ij}^2)$
\item $E(X_{ij}X_{ik})\geq E(Y_{ij}Y_{ik})$
\item $E(X_{ij}X_{lk})\leq E(Y_{ij}Y_{lk}), i\neq l$.
\end{enumerate}
Then
\begin{equation*}
E(\min_{i}\max_{j}(X_{ij}))\leq E(\min_i\max_j(Y_{ij})).
\end{equation*}
\end{theorem}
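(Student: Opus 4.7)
The plan is to derive Theorem \ref{thm:Gordonmesh2} as a direct corollary of the tail version stated immediately above it (Theorem \ref{thm:Gordonmesh1}) by integrating the tail inequality in $\lambda$. Concretely, I would first specialize Theorem \ref{thm:Gordonmesh1} by taking uniform thresholds $\lambda_{ij}=\lambda$ for all $i,j$ and an arbitrary $\lambda\in\mathbb{R}$. The key set-theoretic identity is
\[
\{\min_i\max_j X_{ij}\geq \lambda\}=\bigcap_i\bigcup_j\{X_{ij}\geq\lambda\},
\]
which holds because $\min_i\max_j X_{ij}\geq\lambda$ means that for every $i$ some $j$ achieves $X_{ij}\geq\lambda$. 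The same identity holds for $Y$. Thus Theorem \ref{thm:Gordonmesh1} yields, for every real $\lambda$, the tail comparison
\[
P(\min_i\max_j X_{ij}\geq\lambda)\leq P(\min_i\max_j Y_{ij}\geq\lambda).
\]

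The second step is to convert this tail comparison into a comparison of means via the layer-cake formula. Writing $Z^X=\min_i\max_j X_{ij}$ and $Z^Y=\min_i\max_j Y_{ij}$, I would use
\[
E[Z]=\int_0^\infty P(Z\geq\lambda)\,d\lambda-\int_{-\infty}^0 P(Z<\lambda)\,d\lambda.
\]
The tail inequality above immediately gives $\int_0^\infty P(Z^X\geq\lambda)\,d\lambda\leq\int_0^\infty P(Z^Y\geq\lambda)\,d\lambda$. For the negative-axis term, complementing gives $P(Z^X<\lambda)=1-P(Z^X\geq\lambda)\geq 1-P(Z^Y\geq\lambda)=P(Z^Y<\lambda)$, so $\int_{-\infty}^0 P(Z^X<\lambda)\,d\lambda\geq\int_{-\infty}^0 P(Z^Y<\lambda)\,d\lambda$. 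Subtracting, $E[Z^X]\leq E[Z^Y]$, which is exactly the conclusion.

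Integrability is the only technical point to verify: since $n$ and $m$ are finite and each $X_{ij}$ is Gaussian with variance $E(X_{ij}^2)$, we have $|Z^X|\leq \max_{i,j}|X_{ij}|$, which has finite expectation by standard Gaussian tail bounds, and analogously for $Z^Y$. Hence the two integrals are absolutely convergent and the manipulation is rigorous. The \emph{main obstacle} in this proof is essentially non-existent, because the deep content lives entirely in Theorem \ref{thm:Gordonmesh1} (whose proof via Gaussian interpolation is classical and already invoked by the paper). The only thing one must be mildly careful about is the sign flip on the negative axis of the layer-cake representation, handled by the complementation trick above; once that is in place, the theorem is an essentially mechanical corollary of the tail version.
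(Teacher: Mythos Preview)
Your proposal is correct. The paper does not supply its own proof of this statement; it simply cites \cite{Gordon88} and remarks that it is ``the more simpler version of the above theorem'' (Theorem \ref{thm:Gordonmesh1}). Your derivation---specializing the tail comparison of Theorem \ref{thm:Gordonmesh1} to constant thresholds, reading off stochastic dominance of $\min_i\max_j X_{ij}$ by $\min_i\max_j Y_{ij}$ via the identity $\{\min_i\max_j X_{ij}\geq\lambda\}=\bigcap_i\bigcup_j\{X_{ij}\geq\lambda\}$, and then integrating with the layer-cake formula---is exactly the standard way to pass from a tail inequality to an expectation inequality, and is fully consistent with the paper's framing. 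The integrability check is handled correctly, and the sign bookkeeping on the negative axis is fine.
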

 For $m=1$ in Theorems \ref{thm:Gordonmesh1} and \ref{thm:Gordonmesh2} they effectively simplify to Theorems \ref{thm:Slepian1} and \ref{thm:Slepian2}, respectively (Theorems \ref{thm:Slepian1} and \ref{thm:Slepian2} are often referred to as the Slepian's lemma (see, e.g. \cite{Slep62}). In fact, to be completely chronologically exact, the two above theorems actually extended the Slepian's lemma.

Now, to create a lower-bounding strategy for the minmax asymmetric Little form we will rely on Theorems \ref{thm:Gordonmesh1} and Theorem \ref{thm:Gordonmesh2}.
As in the previous section, we will first focus on the expected value of $\xi_n$ and then on its more general probabilistic properties. The following is then a direct application of Theorem \ref{thm:Gordonmesh2}.
\begin{lemma}
Let $H$ be an $m\times n$ matrix with i.i.d. standard normal components. Let $\g$ and $\h$ be $m\times 1$ and $n\times 1$ vectors, respectively, with i.i.d. standard normal components. Also, let $g$ be a standard normal random variable. Then
\begin{equation}
E(\min_{\x\in\{-\frac{1}{\sqrt{n}},\frac{1}{\sqrt{n}}\}^n}\max_{\y\in\{-\frac{1}{\sqrt{m}},\frac{1}{\sqrt{m}}\}^m}(\y^T H\x + g))\geq E(\max_{\x\in\{-\frac{1}{\sqrt{n}},\frac{1}{\sqrt{n}}\}^n}\max_{\y\in\{-\frac{1}{\sqrt{m}},\frac{1}{\sqrt{m}}\}^m}(\g^T\y+\h^T\x)).\label{eq:negexplemma}
\end{equation}\label{lemma:negexplemma}
\end{lemma}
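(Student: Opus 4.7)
The plan is to apply Theorem \ref{thm:Gordonmesh2} with the pair $(\x,\y)$ serving as the composite index, with $\x$ playing the role of the outer (minimizing) index and $\y$ the inner (maximizing) one. I would introduce two centered Gaussian processes
\[
X_{\x,\y}=\g^T\y+\h^T\x \qquad\text{and}\qquad Y_{\x,\y}=\y^T H\x+g,
\]
both of which are Gaussian since $H$, $\g$, $\h$, $g$ all have independent standard normal entries. The proof then reduces to verifying the three covariance conditions of the theorem, after which the comparison principle immediately yields the claim (the independent summand $g$ on the $Y$ side has zero expectation and only inflates the variance in the correct way).

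Condition~1 follows from the built-in unit-norm constraints $\|\x\|_2=\|\y\|_2=1$, which hold for every element of the sets $\{\pm 1/\sqrt n\}^n$ and $\{\pm 1/\sqrt m\}^m$: both $E(X_{\x,\y}^2)=\|\y\|_2^2+\|\x\|_2^2$ and $E(Y_{\x,\y}^2)=\|\y\|_2^2\|\x\|_2^2+1$ equal $2$. For condition~2 (same outer index $\x$, distinct inner indices $\y,\y'$), a direct computation gives $E(X_{\x,\y}X_{\x,\y'})=\y^T\y'+1=E(Y_{\x,\y}Y_{\x,\y'})$, so the required inequality holds with equality.

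The one substantive step is condition~3, for $\x\neq\x'$. One computes
\[
E(X_{\x,\y}X_{\x',\y'})-E(Y_{\x,\y}Y_{\x',\y'})=\bigl(\y^T\y'+\x^T\x'\bigr)-\bigl((\y^T\y')(\x^T\x')+1\bigr)=-(1-\y^T\y')(1-\x^T\x'),
\]
which is $\leq 0$ because Cauchy-Schwarz combined with the unit-norm constraints gives $\y^T\y'\leq 1$ and $\x^T\x'\leq 1$, so both factors are non-negative. In effect, the bilinear form $\y^T H\x$ contributes the \emph{product} $(\y^T\y')(\x^T\x')$ to the $Y$-covariance while the decoupled form $\g^T\y+\h^T\x$ contributes a \emph{sum} to the $X$-covariance, and the comparison between a sum and a product of two numbers in $[-1,1]$ (plus the constant $1$ balancing the variance) is what drives the theorem in the right direction. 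This is precisely where the $1/\sqrt n$ and $1/\sqrt m$ rescaling pays off, and is really the only place where anything non-trivial is happening; everything else is bookkeeping. Once conditions~1--3 are in hand, Theorem \ref{thm:Gordonmesh2} delivers $E(\min_\x\max_\y X_{\x,\y})\leq E(\min_\x\max_\y Y_{\x,\y})$, which is the stated inequality. The ensuing separable computation $E\max_\y\g^T\y+E\min_\x\h^T\x=(\sqrt\alpha-1)\sqrt n\sqrt{2/\pi}$ then produces the lower bound on $E\xi_n/\sqrt n$ that the section is aiming for.
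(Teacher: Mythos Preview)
Your proposal is correct and follows exactly the approach the paper indicates: a direct application of Theorem~\ref{thm:Gordonmesh2} (Gordon's minmax comparison), with the covariance verifications you spell out being precisely the details the paper defers to \cite{StojnicHopBnds10}. Note incidentally that the $\max_{\x}$ on the right-hand side of (\ref{eq:negexplemma}) is a typo for $\min_{\x}$, as both your argument and the paper's subsequent use in (\ref{eq:neghopaftlemma2}) confirm.
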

\begin{proof}
As mentioned above, the proof is a standard/direct application of Theorem \ref{thm:Gordonmesh2}. A sketch of it in a fairly similar scenario can be found in \cite{StojnicHopBnds10}.
\end{proof}

Using results of Lemma \ref{lemma:negexplemma} we then have
\begin{multline}
E(\min_{\x\in\{-\frac{1}{\sqrt{n}},\frac{1}{\sqrt{n}}\}^n}\max_{\y\in\{-\frac{1}{\sqrt{m}},\frac{1}{\sqrt{m}}\}^m} \y^T H\x =E(\min_{\x\in\{-\frac{1}{\sqrt{n}},\frac{1}{\sqrt{n}}\}^n}\max_{\y\in\{-\frac{1}{\sqrt{m}},\frac{1}{\sqrt{m}}\}^m}(\y^T H\x +g))\\ \geq E(\min_{\x\in\{-\frac{1}{\sqrt{n}},\frac{1}{\sqrt{n}}\}^n}\max_{\y\in\{-\frac{1}{\sqrt{m}},\frac{1}{\sqrt{m}}\}^m}
(\g^T\y+\h^T\x))=E\sum_{i=1}^{m}|\g_i|-E\sum_{i=1}^{n}|\h_i|\geq \sqrt{\frac{2}{\pi}}\sqrt{m}-\sqrt{\frac{2}{\pi}}\sqrt{n}.\label{eq:neghopaftlemma2}
\end{multline}
Connecting beginning and end of (\ref{eq:neghopaftlemma2}) we finally have a lower bound on $E\xi_n$ from (\ref{eq:sqrtnegham1}), i.e.
\begin{equation}
E\xi_n=E(\min_{\x\in\{-\frac{1}{\sqrt{n}},\frac{1}{\sqrt{n}}\}^n}\max_{\y\in\{-\frac{1}{\sqrt{m}},\frac{1}{\sqrt{m}}\}^m} \y^T H\x) \geq
\sqrt{\frac{2}{\pi}}\sqrt{m}-\sqrt{\frac{2}{\pi}}\sqrt{n}=(\sqrt{\alpha}-1)\sqrt{\frac{2}{\pi}}\sqrt{n},\label{eq:neghopubexp}
\end{equation}
or in a scaled (possibly) more convenient form
\begin{equation}
\frac{E\xi_n}{\sqrt{n}}=\frac{E(\min_{\x\in\{-\frac{1}{\sqrt{n}},\frac{1}{\sqrt{n}}\}^n}\max_{\y\in\{-\frac{1}{\sqrt{m}},\frac{1}{\sqrt{m}}\}^m} \y^T H\x)}{\sqrt{n}} \geq (\sqrt{\alpha}-1)\sqrt{\frac{2}{\pi}}.\label{eq:neghopubexp1}
\end{equation}
Of course, the above result will be useful as long as the most right quantity is positive, i.e. as long as $\alpha>1$.

We now turn to deriving a more general probabilistic result related to $\xi_n$. We will do so through the following lemma.
\begin{lemma}
Let $H$ be an $m\times n$ matrix with i.i.d. standard normal components. Let $\g$ and $\h$ be $m\times 1$ and $n\times 1$ vectors, respectively, with i.i.d. standard normal components. Also, let $g$ be a standard normal random variable and let $\zeta_{\x}$ be a function of $\x$. Then
\begin{equation}
P(\min_{\x\in\{-\frac{1}{\sqrt{n}},\frac{1}{\sqrt{n}}\}^n}\max_{\y\in\{-\frac{1}{\sqrt{m}},\frac{1}{\sqrt{m}}\}^m}(\y^T A\x+g-\zeta_{\x})\geq 0)\geq
P(\min_{\x\in\{-\frac{1}{\sqrt{n}},\frac{1}{\sqrt{n}}\}^n}\max_{\y\in\{-\frac{1}{\sqrt{m}},\frac{1}{\sqrt{m}}\}^m}(\g^T\y+\h^T\x-\zeta_{\x})\geq 0).\label{eq:negproblemma}
\end{equation}\label{lemma:negproblemma}
\end{lemma}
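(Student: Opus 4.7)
The plan is to obtain (\ref{eq:negproblemma}) as a direct application of Gordon's comparison inequality for Gaussian processes (Theorem \ref{thm:Gordonmesh1}), running in exact parallel to the proof of Lemma \ref{lemma:negexplemma} except that the probability version of Gordon's theorem replaces its expected-value version. First I would introduce, for each admissible pair $(\x,\y)$ with $\x\in\{-1/\sqrt{n},1/\sqrt{n}\}^n$ and $\y\in\{-1/\sqrt{m},1/\sqrt{m}\}^m$, the two centered Gaussian processes
\begin{equation*}
X_{\x,\y}=\g^T\y+\h^T\x,\qquad Y_{\x,\y}=\y^TH\x+g,
\end{equation*}
treating $\x$ as the ``outer'' (min) index and $\y$ as the ``inner'' (max) index in the sense required by Theorem \ref{thm:Gordonmesh1}. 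The key rewriting is the event identity
\begin{equation*}
\bigl\{\min_{\x}\max_{\y}(Z_{\x,\y}-\zeta_{\x})\geq 0\bigr\}=\bigcap_{\x}\bigcup_{\y}\{Z_{\x,\y}\geq\zeta_{\x}\},
\end{equation*}
valid for either $Z\in\{X,Y\}$, which puts both sides of (\ref{eq:negproblemma}) precisely in the $\bigcap\bigcup$ form that Gordon's inequality compares.

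Next I would verify the three covariance hypotheses. Since every admissible $\x$ and $\y$ satisfies $\|\x\|_2=\|\y\|_2=1$, a short second-moment computation gives $E(X_{\x,\y}^2)=\|\y\|_2^2+\|\x\|_2^2=2$ and $E(Y_{\x,\y}^2)=\|\x\|_2^2\|\y\|_2^2+1=2$, so the variance condition holds. When the first index is fixed and only $\y$ varies, both covariances evaluate to $\y^T\y'+1$, meeting the second hypothesis with equality. The third hypothesis carries the real content: for $\x\neq\x'$, using $E(H\x\x'^TH^T)=(\x^T\x')I$ I would compute
\begin{equation*}
E(X_{\x,\y}X_{\x',\y'})=\y^T\y'+\x^T\x',\qquad E(Y_{\x,\y}Y_{\x',\y'})=(\x^T\x')(\y^T\y')+1,
\end{equation*}
so that the required inequality $E(X_{\x,\y}X_{\x',\y'})\leq E(Y_{\x,\y}Y_{\x',\y'})$ reduces to $(1-\x^T\x')(1-\y^T\y')\geq 0$. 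This holds because $\|\x\|_2=\|\x'\|_2=1$ forces $\x^T\x'\leq 1$, and similarly $\y^T\y'\leq 1$, so both factors are nonnegative. Applying Theorem \ref{thm:Gordonmesh1} with $\lambda_{\x,\y}=\zeta_{\x}$ then delivers (\ref{eq:negproblemma}) at once.

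I do not expect any real obstacle, since the entire argument is the probability-level counterpart of Lemma \ref{lemma:negexplemma}. The one step that does all the work is the factorization $(1-\x^T\x')(1-\y^T\y')\geq 0$; the fact that $\y$ lives on the scaled hypercube rather than on the continuous sphere enters only through $\|\y\|_2=1$, so the argument used in \cite{StojnicHopBnds10} for the analogous probabilistic Hopfield lemma transplants essentially verbatim.
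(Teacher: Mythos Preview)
Your proposal is correct and takes essentially the same approach as the paper: the paper's proof simply states that one repeats the construction of Lemma \ref{lemma:negexplemma} but invokes Theorem \ref{thm:Gordonmesh1} in place of Theorem \ref{thm:Gordonmesh2}, and that is precisely what you have carried out in detail. Your covariance verifications and the key factorization $(1-\x^T\x')(1-\y^T\y')\geq 0$ are exactly the computations the paper leaves implicit.
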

\begin{proof}
The proof is basically the same as the proof of Lemma \ref{lemma:negexplemma}. The only difference is that instead of Theorem \ref{thm:Gordonmesh2} it relies on Theorem \ref{thm:Gordonmesh1}.
\end{proof}

Let $\zeta_{\x}=\epsilon_{5}^{(g)}\sqrt{n}+\xi_n^{(l)}$ with all $\epsilon$'s being arbitrarily small positive constants independent of $n$ and
\begin{eqnarray}
& & (1-\epsilon_{1}^{(m)})\sqrt{m}\sqrt{\frac{2}{\pi}}-(1+\epsilon_{1}^{(n)})\sqrt{n}\sqrt{\frac{2}{\pi}}-\epsilon_{5}^{(g)}\sqrt{n}>\xi_n^{(l)}\nonumber \\
& \Leftrightarrow & (1-\epsilon_{1}^{(m)})\sqrt{\alpha}\sqrt{\frac{2}{\pi}}-(1+\epsilon_{1}^{(n)})\sqrt{\frac{2}{\pi}}-\epsilon_{5}^{(g)}>\frac{\xi_n^{(l)}}{\sqrt{n}}.\label{eq:negcondxipu}
\end{eqnarray}
Following further what was done in \cite{StojnicHopBnds10} one then has
\begin{multline}
 \lim_{n\rightarrow\infty}P(\min_{\x\in\{-\frac{1}{\sqrt{n}},\frac{1}{\sqrt{n}}\}^n}\max_{\y\in\{-\frac{1}{\sqrt{m}},\frac{1}{\sqrt{m}}\}^m}\y^T H\x\geq \xi_n^{(l)})\\\geq \lim_{n\rightarrow\infty}P(\min_{\x\in\{-\frac{1}{\sqrt{n}},\frac{1}{\sqrt{n}}\}^n}\max_{\y\in\{-\frac{1}{\sqrt{m}},\frac{1}{\sqrt{m}}\}^m}
 (\g^T\y+\h^T\x-\epsilon_{5}^{(g)}\sqrt{n})\geq \xi_n^{(l)})\geq 1.\label{eq:leftnegprobanal3}
\end{multline}

We summarize our results from this subsection in the following lemma.

\begin{lemma}
Let $H$ be an $m\times n$ matrix with i.i.d. standard normal components. Let $n$ be large and let $m=\alpha n$, where $\alpha>0$ is a constant independent of $n$. Let $\xi_n$ be as in (\ref{eq:sqrtnegham1}). Let all $\epsilon$'s be arbitrarily small positive constants independent of $n$ and let $\xi_n^{(l)}$ be a scalar such that
\begin{equation}
(1-\epsilon_{1}^{(m)})\sqrt{\alpha}\sqrt{\frac{2}{\pi}}-(1+\epsilon_{1}^{(n)})\sqrt{\frac{2}{\pi}}
-\epsilon_{5}^{(g)}>\frac{\xi_n^{(l)}}{\sqrt{n}}.\label{eq:negcondxipuneggenlemma}
\end{equation}
Then
\begin{eqnarray}
& & \lim_{n\rightarrow\infty}P(\min_{\x\in\{-\frac{1}{\sqrt{n}},\frac{1}{\sqrt{n}}\}^n}\max_{\y\in\{-\frac{1}{\sqrt{m}},\frac{1}{\sqrt{m}}\}^m}
\y^T H\x\geq \xi_n^{(l)})\geq 1\nonumber \\
& \Leftrightarrow & \lim_{n\rightarrow\infty}P(\xi_n\geq \xi_n^{(l)})\geq 1 \nonumber \\
& \Leftrightarrow & \lim_{n\rightarrow\infty}P(\xi_n^2\geq (\xi_n^{(l)})^2)\geq 1, \label{eq:neggenproblemma}
\end{eqnarray}
and
\begin{equation}
\frac{E\xi_n}{\sqrt{n}}=\frac{E(\max_{\x\in\{-\frac{1}{\sqrt{n}},\frac{1}{\sqrt{n}}\}^n}\max_{\y\in\{-\frac{1}{\sqrt{m}},\frac{1}{\sqrt{m}}\}^m} \y^T H\x}{\sqrt{n}} \geq (\sqrt{\alpha}-1)\sqrt{\frac{2}{\pi}}.\label{eq:neggenexplemma}
\end{equation}
\label{lemma:neggenlemma}
\end{lemma}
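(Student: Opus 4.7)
The plan is to formalize the computation sketched just above the statement -- equations (\ref{eq:neghopaftlemma2})--(\ref{eq:leftnegprobanal3}) together with Lemmas \ref{lemma:negexplemma} and \ref{lemma:negproblemma} -- into a self-contained proof. The central device is Gordon's min-max comparison (Theorem \ref{thm:Gordonmesh2} for the expectation and Theorem \ref{thm:Gordonmesh1} for the tail), which replaces the coupled bilinear Gaussian process $\y^T H\x$ by the decoupled process $\g^T\y + \h^T\x$. The advantage is that the decoupled process separates under $\min_\x \max_\y$ and reduces to sums of i.i.d. Gaussian magnitudes.

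For the expected-value bound (\ref{eq:neggenexplemma}), I would first apply Lemma \ref{lemma:negexplemma} to obtain $E(\min_\x \max_\y \y^T H\x) \geq E(\min_\x \max_\y (\g^T\y + \h^T\x))$. The right-hand side factorizes as $E\max_\y \g^T\y + E\min_\x \h^T\x$; over the cube $\{-1/\sqrt{m}, 1/\sqrt{m}\}^m$ the inner maximum of $\g^T\y$ equals $(1/\sqrt{m})\sum_{i=1}^m |\g_i|$, whose expectation is $\sqrt{m}\sqrt{2/\pi}$, and the analogous computation gives $\sqrt{n}\sqrt{2/\pi}$ for the $\h$ term. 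Dividing by $\sqrt{n}$ yields the $(\sqrt{\alpha}-1)\sqrt{2/\pi}$ lower bound.

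For the probabilistic statement (\ref{eq:neggenproblemma}), I would apply Lemma \ref{lemma:negproblemma} with $\zeta_\x = \epsilon_5^{(g)}\sqrt{n} + \xi_n^{(l)}$ and invoke the same factorization. The remaining task is to show
\begin{equation*}
\lim_{n\to\infty} P\bigl(\min_\x \max_\y (\g^T\y + \h^T\x) \geq \xi_n^{(l)} + \epsilon_5^{(g)}\sqrt{n}\bigr) = 1,
\end{equation*}
which follows from standard Gaussian concentration of the $1$-Lipschitz functionals $\sum_i|\g_i|/\sqrt{m}$ and $\sum_i|\h_i|/\sqrt{n}$ around their respective means, combined with the slackness built into hypothesis (\ref{eq:negcondxipuneggenlemma}) through $\epsilon_1^{(m)}$, $\epsilon_1^{(n)}$, and $\epsilon_5^{(g)}$. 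The auxiliary scalar $g$ appearing inside Lemma \ref{lemma:negproblemma} is of order $O(1)$ and is absorbed once divided by $\sqrt{n}$. The equivalent squared form in (\ref{eq:neggenproblemma}) then follows because in the interesting regime $\alpha > 1$ the target $\xi_n^{(l)}$ is non-negative, so $t\mapsto t^2$ is monotone on the relevant range (and the statement is vacuous otherwise).

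The only step that requires genuine attention is the upgrade from Slepian's lemma, which sufficed for the max-only analyses of Section \ref{sec:poshop}, to Gordon's strictly stronger comparison needed here because of the outer minimum over $\x$. This forces verification of the additional covariance inequality $E(X_{ij}X_{ik}) \geq E(Y_{ij}Y_{ik})$ on pairs sharing the same $\x$, in addition to the Slepian-type inequality on pairs with different $\x$. That verification is essentially identical to the one carried out in \cite{StojnicHopBnds10} for the negative Hopfield form and transfers to the present index set without modification; once it is in place, the remainder is direct computation.
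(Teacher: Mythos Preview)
Your proposal is correct and follows essentially the same approach as the paper: the expected-value part is precisely the computation in (\ref{eq:neghopaftlemma2})--(\ref{eq:neghopubexp1}) via Lemma \ref{lemma:negexplemma}, and the probability part is exactly the application of Lemma \ref{lemma:negproblemma} with $\zeta_\x=\epsilon_5^{(g)}\sqrt{n}+\xi_n^{(l)}$ followed by concentration, which the paper defers to \cite{StojnicHopBnds10}. Your remark that the passage from Slepian to Gordon is the one nontrivial ingredient, and that the covariance verification carries over verbatim from the Hopfield analysis in \cite{StojnicHopBnds10}, matches the paper's own justification.
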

\begin{proof}
The first part follows from the above discussion. The probability part follows from the considerations presented in \cite{StojnicHopBnds10}.
\end{proof}
As mentioned earlier, the results from the above lemma will be useful only when $\alpha>1$.

\subsection{Lifting the lower-bound on the  ground state energy of the minmax asymmetric Little form}
\label{sec:neghoplblift}

Similarly to what was the case when in Section \ref{sec:poshop} we studied the asymmetric Little form, we recall that the results we presented in the previous subsection are a relatively easy way to establish lower-bounds on the ground state energy of the minmax asymmetric Little model. Again from a pure mathematical prospective what we did in the previous subsection is nothing too original. We essentially just recognized that certain problems of interest in combinatorial optimization can be related to similar concepts from theoretical physics or statistical mechanics and then altogether can be easily fitted into well known probabilistic frameworks. As was the case in Section \ref{sec:poshop}, the random optimization problems that we studied so far in this section are not unknown. Our arguments given above clearly rely on mathematical results of \cite{Gordon85}. In fact, not only do they rely on what was shown in \cite{Gordon85}, their slight modifications were already worked out as simple examples in e.g. \cite{Gordon85}.

On the other hand the results that we will present in this subsection will turn out to be a conceptual counterpart to the improvement we provided for the max form in Subsection \ref{sec:poshopublow}. As such they will (perhaps not surprisingly any more) provide practically substantially better lower bounds than the methods of the previous subsection.

To start things off we recall that in this subsection we of course again look at the optimization problem from (\ref{eq:negham1}). To have it at hand we also rewrite it once again
\begin{equation}
\xi_n=\min_{\x\in\{-\frac{1}{\sqrt{n}},\frac{1}{\sqrt{n}}\}^n}\max_{\y\in\{-\frac{1}{\sqrt{m}},\frac{1}{\sqrt{m}}\}^m}\y^T H\x.\label{eq:sqrtposham2}
\end{equation}
Since we will continue to assume that the elements of $H$ are i.i.d. standard normal random variables we will recall on (and slightly extend) the following result from \cite{Gordon85} that relates to statistical properties of certain Gaussian processes. (This result is essentially a minmax counterpart to the one given in Theorem \ref{thm:Gordonpos1}.)
\begin{theorem}(\cite{Gordon85})
\label{thm:Gordonneg1} Let $X_{ij}$ and $Y_{ij}$, $1\leq i\leq n,1\leq j\leq m$, be two centered Gaussian processes which satisfy the following inequalities for all choices of indices
\begin{enumerate}
\item $E(X_{ij}^2)=E(Y_{ij}^2)$
\item $E(X_{ij}X_{ik})\geq E(Y_{ij}Y_{ik})$
\item $E(X_{ij}X_{lk})\leq E(Y_{ij}Y_{lk}), i\neq l$.
\end{enumerate}
Let $\psi()$ be an increasing function on the real axis. Then
\begin{equation*}
E(\min_{i}\max_{j}\psi(X_{ij}))\leq E(\min_{i}\max_{j}\psi(Y_{ij})).
\end{equation*}
Moreover, let $\psi()$ be a decreasing function on the real axis. Then
\begin{equation*}
E(\max_{i}\min_{j}\psi(X_{ij}))\geq E(\max_{i}\min_{j}\psi(Y_{ij})).
\end{equation*}
\begin{proof}
The proof of all statements but the last one is of course given in \cite{Gordon85}. A simple sketch of the validity of the last statement is given in \cite{StojnicMoreSophHopBnds10}.
\end{proof}
\end{theorem}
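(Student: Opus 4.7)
The plan is to deduce both parts of Theorem \ref{thm:Gordonneg1} from the probability version Theorem \ref{thm:Gordonmesh1}, which is already at our disposal. Specializing that theorem to constant thresholds yields a stochastic-dominance statement for the nested min-max, and monotone transformations respect stochastic order (reversing it when the transformation is nonincreasing). Thus both assertions reduce to the same underlying dominance, once one notes that a monotone $\psi$ commutes with the nested extrema.

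\textbf{First statement (increasing $\psi$).} Since $\psi$ is nondecreasing, for every realization of the Gaussian array one has $\max_j \psi(X_{ij}) = \psi(\max_j X_{ij})$ and then $\min_i \psi(\max_j X_{ij}) = \psi(\min_i \max_j X_{ij})$. Writing $M_X := \min_i \max_j X_{ij}$ and $M_Y := \min_i \max_j Y_{ij}$, the claim reduces to $E\psi(M_X) \leq E\psi(M_Y)$. Now I invoke Theorem \ref{thm:Gordonmesh1} with the constant choice $\lambda_{ij} \equiv \lambda$: the event $\bigcap_i \bigcup_j \{X_{ij} \geq \lambda\}$ coincides with $\{M_X \geq \lambda\}$, so $P(M_X \geq \lambda) \leq P(M_Y \geq \lambda)$ for every $\lambda \in \mathbb{R}$, i.e.\ $M_Y$ stochastically dominates $M_X$. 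By the standard fact that nondecreasing functions preserve stochastic order at the level of expectations (a one-line layer-cake argument, $E\psi(M_Y)-E\psi(M_X) = \int_\mathbb{R} [P(M_X < \lambda)-P(M_Y < \lambda)]\,d\psi(\lambda) \geq 0$), I conclude $E\psi(M_X) \leq E\psi(M_Y)$.

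\textbf{Second statement (decreasing $\psi$).} For nonincreasing $\psi$ the analogous commutation identities read $\min_j \psi(X_{ij}) = \psi(\max_j X_{ij})$ and then $\max_i \psi(\max_j X_{ij}) = \psi(\min_i \max_j X_{ij}) = \psi(M_X)$, since $\psi$ now reverses order at each nested step. Hence the two sides of the claim equal $E\psi(M_X)$ and $E\psi(M_Y)$, respectively. Using the same dominance $P(M_X \geq \lambda) \leq P(M_Y \geq \lambda)$, but now with $d\psi \leq 0$ in the same integral identity, the sign is flipped, giving $E\psi(M_X) \geq E\psi(M_Y)$, as desired.

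\textbf{Main obstacle and alternative.} The only real technical worry is integrability of $\psi(M_X)$ and $\psi(M_Y)$, which is mild because $M_X,M_Y$ are Lipschitz (hence sub-Gaussian) functionals of finite Gaussian arrays, and no growth hypothesis on $\psi$ beyond monotonicity is needed. A self-contained alternative that does not invoke Theorem \ref{thm:Gordonmesh1} as a black box is the classical Gordon interpolation: smooth $\min_i\max_j$ by a $\log$-$\sum$-$\exp$ surrogate $F_{\beta_1,\beta_2}$, differentiate $E\,\psi\bigl(F_{\beta_1,\beta_2}(Z(t))\bigr)$ along $Z_{ij}(t) = \sqrt{t}\,X_{ij} + \sqrt{1-t}\,Y_{ij}$, apply Gaussian integration by parts, and match the sign pattern of the covariance gaps $EX_{ij}X_{lk} - EY_{ij}Y_{lk}$ (nonnegative on-row, nonpositive off-row) against the sign pattern of the mixed partials of $F_{\beta_1,\beta_2}$ (which inherit their signs from the respective min and max). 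The delicate part of this self-contained route is precisely the bookkeeping of signs across on-row and off-row contributions, which is where the two separate covariance hypotheses crucially enter; sending $\beta_1,\beta_2 \to \infty$ then recovers the exact nested $\min\max$.
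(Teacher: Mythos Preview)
Your proof is correct and in fact more self-contained than what the paper offers: the paper simply defers to \cite{Gordon85} for the increasing-$\psi$ statement and to \cite{StojnicMoreSophHopBnds10} for the decreasing-$\psi$ variant, without reproducing any argument. Your route---specializing Theorem~\ref{thm:Gordonmesh1} to constant thresholds $\lambda_{ij}\equiv\lambda$ to get stochastic dominance of $M_Y=\min_i\max_j Y_{ij}$ over $M_X=\min_i\max_j X_{ij}$, and then observing that a monotone $\psi$ commutes (or anti-commutes) with the nested extrema so that both sides collapse to $E\psi(M_X)$ versus $E\psi(M_Y)$---is genuinely different from Gordon's own interpolation argument (which you correctly sketch as your alternative). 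The gain is that the expectation inequality becomes a one-line corollary of the already-available probability inequality, with no sign bookkeeping; the price is that it relies on $\psi$ passing through $\min$ and $\max$, so it would not extend to functionals that are not of the form $\psi(\min_i\max_j\,\cdot\,)$, where Gordon's interpolation still applies. One small caveat: your integrability remark is slightly off, since sub-Gaussianity of $M_X,M_Y$ does not by itself guarantee $E|\psi(M_X)|<\infty$ for \emph{arbitrary} monotone $\psi$ (take $\psi(x)=e^{x^3}$); this is really an omission in the theorem statement rather than in your argument, and the only $\psi$ actually used downstream in the paper is $e^{\pm c x}$, for which everything is finite.
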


Although it may not be immediately visible, the results we presented in the previous subsection essentially also rely on the above theorem. As was the case with the max form in Section \ref{sec:poshop}, the strategy employed in the previous subsection relied only on a basic version of the above theorem where $\psi(x)=x$. Similarly to what was done in Subsection \ref{sec:poshopublow}, we will here substantially upgrade the strategy from the previous subsection by looking at a very simple (but way better) different version of $\psi()$.

As was the case with the max form in Subsection \ref{sec:poshopublow}, we do mention without going into details that the ground state energies will again concentrate in the thermodynamic limit and hence we will mostly focus on the expected value of $\xi_n$ (it is relatively easy to adapt our results to describe more general probabilistic concentrating properties of ground state energies similar to those presented in Subsections \ref{sec:poshoplb}, \ref{sec:poshopub}, and \ref{sec:neghoplb}). The following is then a direct application of Theorem \ref{thm:Gordonneg1}.
\begin{lemma}
Let $H$ be an $m\times n$ matrix with i.i.d. standard normal components. Let $\g$ and $\h$ be $m\times 1$ and $n\times 1$ vectors, respectively, with i.i.d. standard normal components. Also, let $g$ be a standard normal random variable and let $c_3$ be a positive constant. Then
\begin{equation}
E(\max_{\x\in\{-\frac{1}{\sqrt{n}},\frac{1}{\sqrt{n}}\}^n}\min_{\y\in\{-\frac{1}{\sqrt{m}},\frac{1}{\sqrt{m}}\}^m}e^{-c_3(\y^T H\x + g)})\leq E(\max_{\x\in\{-\frac{1}{\sqrt{n}},\frac{1}{\sqrt{n}}\}^n}\min_{\y\in\{-\frac{1}{\sqrt{m}},\frac{1}{\sqrt{m}}\}^m}e^{-c_3(\g^T\y+\h^T\x)}).\label{eq:negexplemma}
\end{equation}\label{lemma:negexplemmalift}
\end{lemma}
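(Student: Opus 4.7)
The plan is to identify the inequality as a direct consequence of Theorem \ref{thm:Gordonneg1} with the decreasing function $\psi(t)=e^{-c_3 t}$ applied to two carefully paired Gaussian processes indexed by pairs $(\x,\y)$ in $\{-\frac{1}{\sqrt{n}},\frac{1}{\sqrt{n}}\}^n\times\{-\frac{1}{\sqrt{m}},\frac{1}{\sqrt{m}}\}^m$. To match the asserted inequality direction I would label $X_{\x,\y}=\g^T\y+\h^T\x$ and $Y_{\x,\y}=\y^T H\x+g$, treating $\x$ as the outer (max) index and $\y$ as the inner (min) index in the theorem.

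First I would verify the equal-variance hypothesis. Using $\|\x\|_2^2=\|\y\|_2^2=1$ and the independence of $g$,
\[
E(X_{\x,\y}^2) = \|\y\|_2^2+\|\x\|_2^2 = 2, \qquad E(Y_{\x,\y}^2) = \|\y\|_2^2\,\|\x\|_2^2 + E(g^2) = 2.
\]
Next I would check the two cross-covariance conditions. When the outer indices agree ($\x=\x'$) a direct computation gives $E(X_{\x,\y}X_{\x,\y'})=\y^T\y'+1=E(Y_{\x,\y}Y_{\x,\y'})$, so condition (ii) of Theorem \ref{thm:Gordonneg1} holds with equality. When $\x\neq\x'$, writing $a=\y^T\y'$ and $b=\x^T\x'$,
\[
E(X_{\x,\y}X_{\x',\y'}) - E(Y_{\x,\y}Y_{\x',\y'}) = (a+b) - (ab+1) = -(1-a)(1-b).
\]
Since $\x,\x',\y,\y'$ are unit vectors, both $1-a\geq 0$ and $1-b\geq 0$, so this difference is $\leq 0$, which is exactly condition (iii).

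With all three hypotheses of Theorem \ref{thm:Gordonneg1} in place and $\psi(t)=e^{-c_3 t}$ strictly decreasing on the real line (because $c_3>0$), the decreasing-function half of that theorem gives
\[
E(\max_{\x}\min_{\y}\psi(X_{\x,\y}))\geq E(\max_{\x}\min_{\y}\psi(Y_{\x,\y})),
\]
which is precisely (\ref{eq:negexplemma}). The only real pitfall is the choice of which process plays the role of $X$ versus $Y$: Theorem \ref{thm:Gordonneg1} flips inequality direction between increasing and decreasing $\psi$, so swapping the labels would produce a bound of the wrong sign. Past that bookkeeping, the computation is the minmax analogue of the factoring carried out for the positive Hopfield form in \cite{StojnicMoreSophHopBnds10}, and the argument presents no genuine obstacle.
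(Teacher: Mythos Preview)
Your proposal is correct and is essentially the same approach the paper takes: the paper simply asserts that the lemma is a direct application of Theorem~\ref{thm:Gordonneg1} (referring to \cite{StojnicMoreSophHopBnds10} for a sketch in a related setting), and you have filled in precisely the covariance bookkeeping that this application requires, with the correct labeling of $X$ versus $Y$ and the correct choice of the decreasing function $\psi(t)=e^{-c_3 t}$.
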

\begin{proof}
As mentioned above, the proof is a standard/direct application of Theorem \ref{thm:Gordonneg1}. A sketch of it in a similar scenario is given in \cite{StojnicMoreSophHopBnds10}.
\end{proof}

Following step by step what was done in \cite{StojnicMoreSophHopBnds10} one can establish a lower bound on the expected value of the ground state energy of the minmax asymmetric Little model
\begin{equation}
\hspace{-.5in}E(\min_{\x\in\{-\frac{1}{\sqrt{n}},\frac{1}{\sqrt{n}}\}^n}\max_{\y\in\{-\frac{1}{\sqrt{m}},\frac{1}{\sqrt{m}}\}^m}\y^T H\x\geq
\frac{c_3}{2}-\frac{1}{c_3}\log(E(\max_{\x\in\{-\frac{1}{\sqrt{n}},\frac{1}{\sqrt{n}}\}^n}(e^{-c_3\h^T\x})))
-\frac{1}{c_3}\log(E(\min_{\y\in\{-\frac{1}{\sqrt{m}},\frac{1}{\sqrt{m}}\}^m}(e^{-c_3\g^T\y}))).\label{eq:chneg8}
\end{equation}
As in Subsection \ref{sec:poshopublow}, let $c_3=c_3^{(s)}\sqrt{n}$ where $c_3^{(s)}$ is a constant independent of $n$. Then (\ref{eq:chneg8}) becomes
\begin{eqnarray}
\frac{E(\min_{\x\in\{-\frac{1}{\sqrt{n}},\frac{1}{\sqrt{n}}\}^n}\max_{\y\in\{-\frac{1}{\sqrt{m}},\frac{1}{\sqrt{m}}\}^m}\y^T H\x)}{\sqrt{n}}
& \geq &
\frac{c_3^{(s)}}{2}-\frac{1}{nc_3^{(s)}}\log(E(\max_{\x\in\{-1,1\}^n}(-e^{c_3^{(s)}\h^T\x})))\nonumber \\
& - &\frac{1}{nc_3^{(s)}}\log(E(\min_{\y\in\{-1,1\}^m}(e^{-c_3^{(s)}\sqrt{\frac{n}{m}}\g^T\y}))).\label{eq:chneg91}
\end{eqnarray}
Transforming further we obtain
\begin{multline}
\frac{c_3^{(s)}}{2}-\frac{1}{nc_3^{(s)}}\log(E(\max_{\x\in\{-1,1\}^n}(e^{-c_3^{(s)}\h^T\x})))
- \frac{1}{nc_3^{(s)}}\log(E(\min_{\y\in\{-1,1\}^m}(e^{-c_3^{(s)}\sqrt{\frac{n}{m}}\g^T\y})))\\
=
\frac{c_3^{(s)}}{2}-\frac{1}{c_3^{(s)}}\log(E(e^{c_3^{(s)}|\h_1|}))-\frac{\alpha}{c_3^{(s)}}\log(E(e^{-c_3^{(s)}\sqrt{\frac{1}{\alpha}}|\g_1|}))
\\
 =
\frac{c_3^{(s)}}{2}-\frac{c_3^{(s)}}{2}-\frac{1}{c_3^{(s)}}\log(\mbox{erfc}(-\frac{c_3^{(s)}}{\sqrt{2}}))
-\frac{c_3^{(s)}}{2}-\frac{\alpha}{c_3^{(s)}}\log(\mbox{erfc}(\frac{c_3^{(s)}}{\sqrt{2\alpha}}))
 \\
 =-\frac{c_3^{(s)}}{2}-\frac{1}{c_3^{(s)}}\log(\mbox{erfc}(-\frac{c_3^{(s)}}{\sqrt{2}}))
-\frac{\alpha}{c_3^{(s)}}\log(\mbox{erfc}(\frac{c_3^{(s)}}{\sqrt{2\alpha}})).\label{eq:chneg92}
\end{multline}
Connecting (\ref{eq:chneg91}) and (\ref{eq:chneg92}) we finally have
\begin{equation}
\frac{E(\min_{\x\in\{-\frac{1}{\sqrt{n}},\frac{1}{\sqrt{n}}\}^n}\max_{\y\in\{-\frac{1}{\sqrt{m}},\frac{1}{\sqrt{m}}\}^m}\y^T H\x)}{\sqrt{n}}
\geq -\frac{c_3^{(s)}}{2}-\frac{1}{c_3^{(s)}}\log(\mbox{erfc}(-\frac{c_3^{(s)}}{\sqrt{2}}))
-\frac{\alpha}{c_3^{(s)}}\log(\mbox{erfc}(\frac{c_3^{(s)}}{\sqrt{2\alpha}})).\label{eq:chneg9}
\end{equation}
One should again note that the above bound is effectively correct for any positive constant $c_3^{(s)}$.  Of course to make it as tight as possible one then has
\begin{equation}
\hspace{-.1in}\frac{E(\min_{\x\in\{-\frac{1}{\sqrt{n}},\frac{1}{\sqrt{n}}\}^n}\max_{\y\in\{-\frac{1}{\sqrt{m}},\frac{1}{\sqrt{m}}\}^m}\y^T H\x)}{\sqrt{n}}
\geq \max_{c_3^{(s)}\geq 0}\left (-\frac{c_3^{(s)}}{2}-\frac{1}{c_3^{(s)}}\log(\mbox{erfc}(-\frac{c_3^{(s)}}{\sqrt{2}}))
-\frac{\alpha}{c_3^{(s)}}\log(\mbox{erfc}(\frac{c_3^{(s)}}{\sqrt{2\alpha}}))\right ).\label{eq:lbmorsoph1}
\end{equation}

We summarize our results from this subsection in the following lemma.

\begin{lemma}
Let $H$ be an $m\times n$ matrix with i.i.d. standard normal components. Let $n$ be large and let $m=\alpha n$, where $\alpha>0$ is a constant independent of $n$. Let $\xi_n$ be as in (\ref{eq:sqrtnegham1}) let and $\xi_n^{(l,lift)}$ be a scalar such that
\begin{equation}
\xi_n^{(l,lift)}=\max_{c_3^{(s)}\geq 0}\left (-\frac{c_3^{(s)}}{2}-\frac{1}{c_3^{(s)}}\log(\mbox{erfc}(-\frac{c_3^{(s)}}{\sqrt{2}}))
-\frac{\alpha}{c_3^{(s)}}\log(\mbox{erfc}(\frac{c_3^{(s)}}{\sqrt{2\alpha}}))\right ).\label{eq:condxipuneggenlemma}
\end{equation}
Then
\begin{equation}
\frac{E\xi_n}{\sqrt{n}}\geq\xi_n^{(l,lift)}.\label{eq:neggenexplemma}
\end{equation}
Moreover,
\begin{eqnarray}
& & \lim_{n\rightarrow\infty}P(\min_{\x\in\{-\frac{1}{\sqrt{n}},\frac{1}{\sqrt{n}}\}^n}\max_{\y\in\{-\frac{1}{\sqrt{m}},\frac{1}{\sqrt{m}}\}^m}
\y^T H\x\geq \xi_n^{(l,lift)})\geq 1\nonumber \\
& \Leftrightarrow & \lim_{n\rightarrow\infty}P(\xi_n\geq \xi_n^{(l,lift)})\geq 1 \nonumber \\
& \Leftrightarrow & \lim_{n\rightarrow\infty}P(\xi_n^2\geq (\xi_n^{(l,lift)})^2)\geq 1. \label{eq:neggenproblemma}
\end{eqnarray}
In particular, when $\alpha=1$
\begin{equation}
\frac{E\xi_n}{\sqrt{n}}\geq\xi_n^{(l,lift)}=0.24439.\label{eq:neggenexplemma1}
\end{equation}
\label{lemma:neggenlemmalift}
\end{lemma}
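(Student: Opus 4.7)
The plan is to execute in detail the sketch that the excerpt's text has already laid out and then handle the probability portion via Gaussian concentration. The starting point is Lemma \ref{lemma:negexplemmalift}, itself a direct application of Theorem \ref{thm:Gordonneg1} with the decreasing function $\psi(x)=e^{-c_3 x}$. Since $e^{-c_3 x}$ is monotone decreasing, the inner $\min_{\y}e^{-c_3(\y^T H\x+g)}$ equals $e^{-c_3(\max_{\y}\y^T H\x+g)}$ and the outer $\max_{\x}$ then yields $e^{-c_3(\xi_n+g)}$, so the left-hand side of \eqref{eq:negexplemma} becomes $E(e^{-c_3 g})\cdot E(e^{-c_3\xi_n})=e^{c_3^2/2}E(e^{-c_3\xi_n})$ by independence of $g$ and $H$. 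On the right-hand side the variables $\x$ and $\y$ decouple, giving $\min_{\x}\h^T\x=-\tfrac{1}{\sqrt{n}}\sum_i|\h_i|$ and $\max_{\y}\g^T\y=\tfrac{1}{\sqrt{m}}\sum_i|\g_i|$, so by independence of $\g,\h$ and of the coordinates within each, the right-hand side factorizes into a product of one-dimensional Gaussian Laplace transforms.

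Next I would take logarithms, divide by $-c_3$, and invoke Jensen's inequality in the form $E\xi_n\geq -\tfrac{1}{c_3}\log E(e^{-c_3\xi_n})$ (which holds because $e^{-c_3 x}$ is convex in $x$). This produces precisely \eqref{eq:chneg8}. The scaling $c_3=c_3^{(s)}\sqrt{n}$ yields \eqref{eq:chneg91}, and the Gaussian moment generating identity $E(e^{c|\h_1|})=e^{c^2/2}\mbox{erfc}(-c/\sqrt{2})$ (and its analogue with $c/\sqrt{\alpha}$ for $|\g_1|$) absorbs the quadratic prefactors against the leading $c_3^{(s)}/2$ to reproduce \eqref{eq:chneg92} and hence \eqref{eq:chneg9}. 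Since the inequality holds for every $c_3^{(s)}>0$, taking the supremum establishes \eqref{eq:neggenexplemma}. For the concentration claim \eqref{eq:neggenproblemma} I would appeal to standard Gaussian concentration of measure: $\xi_n$ is a Lipschitz function of the i.i.d.\ standard normal entries of $H$, so $\xi_n/\sqrt{n}$ concentrates exponentially around its mean in the thermodynamic limit, and combined with the expectation bound (and an arbitrarily small slack in the choice of $c_3^{(s)}$) this yields the probability-one assertion. The numerical value $0.24439$ at $\alpha=1$ is a routine one-dimensional numerical optimization of the explicit expression in \eqref{eq:condxipuneggenlemma}.

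The principal conceptual obstacle, already handled in \cite{StojnicMoreSophHopBnds10} for the Hopfield analogue, is keeping the two monotonicities aligned: one must use the decreasing-$\psi$ branch of Theorem \ref{thm:Gordonneg1} so that the min-max of $\y^T H\x$ translates to the max-min of the exponential, and the subsequent Jensen step must then go in the direction that produces a \emph{lower} bound on $E\xi_n$ from the Laplace-type comparison (which is itself an \emph{upper} bound on the MGF $E(e^{-c_3\xi_n})$). A secondary check is that the optimizing $c_3^{(s)}$ in \eqref{eq:condxipuneggenlemma} is finite and strictly positive so the bound is nontrivial; this follows by inspecting the endpoint behavior of the right-hand side of \eqref{eq:chneg9} as $c_3^{(s)}\to 0^+$ and $c_3^{(s)}\to\infty$, which also yields the numerical maximizer underlying the $0.24439$ value. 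One further minor point is the equivalence with the squared version in \eqref{eq:neggenproblemma}: for $\alpha$ such that $\xi_n^{(l,lift)}\geq 0$ (which includes $\alpha=1$) squaring preserves the inequality, and the equivalence is immediate.
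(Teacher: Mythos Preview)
Your proposal is correct and follows essentially the same route as the paper: apply Lemma \ref{lemma:negexplemmalift} (Gordon's comparison with $\psi(x)=e^{-c_3x}$), use Jensen to pass from the Laplace-transform bound to a lower bound on $E\xi_n$, compute the one-dimensional Gaussian integrals to obtain (\ref{eq:chneg9}), optimize over $c_3^{(s)}$, and then invoke Gaussian Lipschitz concentration for the probability statement. The paper's own proof simply points to the discussion preceding the lemma and to \cite{StojnicHopBnds10,StojnicMoreSophHopBnds10}; you have spelled out the Jensen step and the MGF identity $E(e^{c|\h_1|})=e^{c^2/2}\mbox{erfc}(-c/\sqrt{2})$ explicitly, which is exactly what those references contain.
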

\begin{proof}
The first part of the proof related to the expected values follows from the above discussion. The probability part follows by the concentration arguments similar to those we presented in Section \ref{sec:poshop} (also, see, e.g discussion in \cite{StojnicHopBnds10}).
\end{proof}

Similarly to what we did in Subsection \ref{sec:poshopublow}, one can assess how the above lemma works in practice by chooosing a particular $\alpha$. Also, as we did in is Subsection \ref{sec:poshopublow} (and as we specified above in Lemma \ref{lemma:neggenlemmalift}) we selected a popular squared choice, i.e. we selected $\alpha=1$. For such an $\alpha$ one obtains $\xi_n^{(l,lift)}=0.24439$. This value is of course substantially better than $0$ offered by considerations presented in the previous subsection. Also, one should note that solving minmax version of the asymmetric Little problem can be much harder than solving the max version considered in Section \ref{sec:poshop}. According to the results that we presented above this seems certainly true if the main concern is the typical behavior of optimal values of the problem's random instances (moreover, the minmax problem is substantially harder algorithmically as well). In fact the analytical results that we provided in this paper actually indicate what seems to be a trend. Namely, the upper bound that we obtained on the max version of the problem in Subsection \ref{sec:poshopub} was conceptually substantially improved through the mechanism of Subsection \ref{sec:poshopublow}. However, practically/numerically the improvement is hardly a few percents. On the other hand the lower bound that we obtained on the minmax version of the problem in Subsection \ref{sec:neghoplb} was both, conceptually and practically, substantially improved through the mechanism of Subsection \ref{sec:neghoplblift}. A fairly similar trend we observed when we studied the Hopfield models in \cite{StojnicHopBnds10,StojnicMoreSophHopBnds10}. The upper bounds on the max version of the problem were improved conceptually but the practical improvement was not that substantial. On the other hand, in addition to the conceptual improvement, the numerical improvement on the lower bounds of the minmax version of the Hopfield problem was also substantial. Moreover, if one compares the max and minmax problems for the asymmetric Little and Hopfield model the improvement we achieved for the Little model cases is numerically more significant. All of this of course suggests/confirms that the replica symmetry predictions are much closer to the true values when it comes to the max problems.

\section{Conclusion}
\label{sec:conc}

In this paper we looked at classic asymmetric Little models from statistical mechanics. These models were thoroughly studied both numerically and theoretically in e.g. \cite{BruParRit92,CabMarPaoPar88,BarGenGue11bip}. The replica theory investigations conducted in \cite{BruParRit92} essentially predicted that the asymmetric Little model should resemble the well-known/studied Sherrington-Kirkpatrick (SK) model. We in this paper provided a few mathematically rigorous results that hint that the predictions made in \cite{BruParRit92} and (to a degree later in \cite{BarGenGue11bip}) may indeed be correct.

More specifically, we studied the behavior of the free energy in the thermodynamic limit at zero temperature (this of course corresponds to what is in statistical physics typically called the ground state energy in the thermodynamic limit). We proved the lower bounds on the ground state energy that match the appropriately scaled values one would get for the SK model. We also created a simple upper bound which turned out to match the one that could be obtained through the replica theory approach assuming the replica symmetry (see, e.g. \cite{BarGenGue11bip,BruParRit92}). Moreover, we then presented a powerful mechanism that can be used to lower  this simple upper bound. Finally, the lowered upper bound came fairly close to the the lower bound which matches the corresponding one of the appropriately scaled SK model. Consequently, the results that we provided hint that the original predictions about resemblance/equivalnce between the asymmetric Little and the appropriately scaled SK model may in fact be mathematically correct.

For the sake of simplicity of the exposition we chose to focus on the free energy in the zero temperature limit. All our considerations can easily be extended to cover the behavior of the free energy at any temperature. That requires no further insight but does require a substantially more detailed presentation and we will defer further discussion in that direction to a forthcoming paper.

We then switched from the standard asymmetric Little model to what we referred to as the minmax asymmetric Little model/form (in such a context we often referred to the standard asymmetric Little model as the max asymmetric Little model). While the minmax model on its own may not have as much of statistical physics prominence as does the max one it is nevertheless an interesting/important mathematical/optimization type of problem. Namely, the resulting problem is a version of what is called number partitioning problem in optimization/complexity/algorithms theory. We then studied the typical behavior of this problem and presented a set of results that resemble in large part the ones that we presented for the max case. Again, the improvements we achieved over standard methods seemed both, conceptually and practically significant.

Finally, we should mention that we presented a collection of theoretical results for a particular type of randomness, namely the standard normal one. However, as was the case when we studied the Hopfield models in \cite{StojnicHopBnds10,StojnicMoreSophHopBnds10}, all results that we presented can easily be extended to cover a wide range of other types of randomness. There are many ways how this can be done (and the rigorous proofs are not that hard either). Typically they all would boil down to a repetitive use of the central limit theorem. For example, a particularly simple and elegant approach would be the one of Lindeberg \cite{Lindeberg22}. Adapting our exposition to fit into the framework of the Lindeberg principle is relatively easy and in fact if one uses the elegant approach of \cite{Chatterjee06} pretty much a routine. However, as we mentioned when studying the Hopfield model \cite{StojnicHopBnds10}, since we did not create these techniques we chose not to do these routine generalizations. On the other hand, to make sure that the interested reader has a full grasp of a generality of the results presented here, we do emphasize again that pretty much any distribution that can be pushed through the Lindeberg principle would work in place of the Gaussian one that we used.

It is also important to emphasize that we in this paper presented a collection of very simple observations. One can improve many of the results that we presented here but at the expense of the introduction of a more complicated theory. We will present results in such a direction elsewhere.

\begin{singlespace}
\bibliographystyle{plain}
\bibliography{AsymmLittBndsRefs}
\end{singlespace}

\end{document}